\DeclareMathOperator{\charac}{char}
\DeclareMathOperator{\SRIM}{\mathit{SRIM}}
\newcommand{\C}{\mathbb C}
\newcommand{\F}{\mathbb F}
\newcommand{\N}{\mathbb N}
\newcommand{\Z}{\mathbb Z}
\newcommand{\PGL}{\mathrm{PGL}}
\newcommand{\hide}[1]{}
\newtheorem{dummy}{Dummy}
\newtheorem{lemma}[dummy]{Lemma}
\newtheorem{theorem}[dummy]{Theorem}
\newtheorem{cor}[dummy]{Corollary}
\theoremstyle{definition}
\theoremstyle{remark}
\begin{document}

\bibliographystyle{amsalpha}
\author{Sandro Mattarei}
\email{smattarei@lincoln.ac.uk}
\address{School of Mathematics and Physics\\
  University of Lincoln\\
  Brayford Pool\\
  Lincoln, LN6 7TS\\
  United Kingdom}
\author{Marco Pizzato}
\email{marco.pizzato1@gmail.com}
\title{Generalizations of self-reciprocal polynomials}
\begin{abstract}
A formula for the number of monic irreducible self-reciprocal polynomials, of a given degree over a finite field, was given by Carlitz in 1967.
In 2011 Ahmadi showed that Carlitz's formula extends, essentially without change,
to a count of irreducible polynomials arising through an arbitrary
{\em quadratic transformation}.
In the present paper we provide an explanation for this extension, and a simpler proof of Ahmadi's result,
by a reduction to the known special case of self-reciprocal polynomials and a minor variation.
We also prove further results on polynomials arising through a quadratic transformation, and through some special transformations of higher degree.
\end{abstract}
\subjclass[2000]{Primary 12E05; secondary 12E10, 12E20}
\keywords{Irreducible polynomials; Self-reciprocal polynomials; Quadratic transformations}

\maketitle

\section{Introduction}\label{sec:intro}

A polynomial $f(x)$, of positive degree, over a field, is said to be {\em self-reciprocal} if
$x^{\deg f}f(1/x)=f(x)$.
Every monic irreducible self-reciprocal polynomial except for $x+1$ has even degree.
The abbreviation {\em srim} is in use for {\em self-reciprocal irreducible monic}.
Various counting formulas for irreducible polynomials of certain types exist,
on the model of Gauss's formula
$(1/n)\sum_{d\mid n}\mu(d)q^{n/d}$
for the total number of monic irreducible polynomials of degree $n$ over the field $\F_q$ of $q$ elements.
Carlitz proved in~\cite{Carlitz:srim} that the number $\SRIM(2n,q)$ of {\em srim} polynomials of degree $2n$ over a finite field $\F_q$
is given by
\begin{equation}\label{eq:Carlitz}
\SRIM(2n,q)=
\begin{cases}
\displaystyle
\frac{q^n-1}{2n}
 &\text{if $q$ is odd and $n$ is a power of $2$,}
\\
\displaystyle
\frac{1}{2n}\sum_{d\mid n,\ \text{$d$ odd}}
\mu(d)q^{n/d}
 &\text{otherwise.}
\end{cases}
\end{equation}

Simpler proofs of Equation~\eqref{eq:Carlitz} were given by Cohen~\cite{Cohen:irreducible} and Meyn~\cite{Meyn}.
The latter proof applies M\"obius inversion to the fact, of which our Theorem~\ref{thm:Meyn} below is a slight generalization, that
the nonlinear {\em srim} are exactly the nonlinear irreducible factors of polynomials of the form $x^{q^{n}+1}-1$.
The proofs of Carlitz and Cohen rely, in a crucial way, on
the well-known fact that any self-reciprocal polynomial of degree $2n$ over a
field can be expressed as $x^n\cdot f(x+x^{-1})$ for some polynomial $f(x)$ of degree $n$.
(This fact also featured in Meyn's paper, but was used only for further developments.)
This point of view motivated Ahmadi~\cite{Ahmadi:Carlitz} to study polynomials
obtained from such $f(x)$ through a more general {\em quadratic transformation,} namely, polynomials of the form
$h(x)^n\cdot f\bigl(g(x)/h(x)\bigr)$,
where $g(x)$ and $h(x)$ are coprime polynomials with $\max(\deg g,\deg h)=2$,
and $n=\deg f$ as above.
The special case of {\em srim} polynomials arises when $g(x)/h(x)=(x^2+1)/x=x+x^{-1}$.
Ahmadi found that the number of such polynomials which are irreducible of degree $2n>2$ over $\F_q$,
for a given $g(x)/h(x)$, equals $SRIM(2n,q)$ except, for $q$ even, when both $g(x)$ and $h(x)$
miss the linear term, in which case no irreducible polynomials arise.

One goal of this paper is to give a simple explanation
of Ahmadi's conclusion that, aside from that exceptional case, Carlitz's count of {\em srim} polynomials
extends unchanged to a count of the polynomials obtained through an arbitrary fixed quadratic transformation.
The reason is that the quadratic rational expression $g(x)/h(x)$ employed may be composed with
linear fractional expressions $(ax+b)/(cx+d)$ on both sides without changing the resulting count of irreducible polynomials.
By doing so $g(x)/h(x)$ can be brought to one of only two forms over $\F_q$, which in the odd characteristic case
are $x+x^{-1}$ and $x+\sigma x^{-1}$, where $\sigma$ is any fixed non-square in $\F_q^{\ast}$.
We explain one way to perform this reduction in Section~\ref{sec:quadratic}.
At this point, half the cases follow from Carlitz's result, and the other half from a straightforward variation.
This produces a shorter and simpler proof of Ahmadi's result,
which we present in Section~\ref{sec:counting}.

In the rest of the paper we present supplementary results on this topic.
Meyn's proof in~\cite{Meyn} of Carlitz's counting formula for {\em srim} polynomials was based
on viewing them as irreducible factors of polynomials of the form $x^{q^n+1}-1$.
That explicit description of all irreducible factors of $x^{q^n+1}-1$
as self-reciprocal polynomials of certain degrees (plus $x-1$ when $q$ is odd),
admits a much more general version which we present in Section~\ref{sec:explicit}.
In theorem~\ref{thm:Meyn_generalized} there, irreducible polynomials arising through an arbitrary quadratic transformation
are used to describe the complete factorization of a certain related polynomial of the form
$ax^{q^n+1}-b(x^{q^n}+x)+c$.
Note that factorizing polynomials of this form is also the subject of~\cite{StiTop},
but as we discuss at the end of Section~\ref{sec:explicit}
there is little overlap with our results as the goals are different.

We have mentioned how the well-known characterization of even-degree self-reciprocal polynomials as those of the form $x^{\deg f}\cdot f(x+1/x)$
was a simple but essential fact for various investigations of self-reciprocal polynomials.
This is also the case in the present paper, with the definition of being self-reciprocal as, appropriately formulated, invariance under the substitution $x\mapsto 1/x$
first slightly generalized to invariance under $x\mapsto\sigma/x$ in Lemma~\ref{lemma:invariant_quadratic}, and then
to invariance under any involutory M\"obius transformation in Lemma~\ref{lemma:invariant_generalized}.
We devote Section~\ref{sec:variations} to a discussion of alternate proofs of these results, and to variations
concerning invariance under M\"obius transformations of higher order.

The research leading to this paper begun when the second author was a PhD student at the University of Trento,
Italy, under the supervision of the first author.
Part of these results have appeared among other results in~\cite{Pizzato:thesis}.

\section{Quadratic transformations}\label{sec:quadratic}

Let $K$ be any field and fix a {\em quadratic rational expression} $R(x)=g(x)/h(x)$,
where $g(x),h(x)\in K[x]$ are coprime polynomials with $\max(\deg g,\deg h)=2$.
This induces a {\em quadratic transformation} of polynomials in $K[x]$,
which sends (zero to zero if we like, and) a nonzero polynomial $f(x)$ to the polynomial
$f_R(x):=h(x)^{\deg f}\cdot f\bigl(g(x)/h(x)\bigr)$.
Thus, the quadratic transformation is given by the substitution $x\mapsto R(x)$ into $f(x)$
(or applying {\em pre-composition} with $x\mapsto R(x)$ if we prefer), followed with multiplication
by the least power of $h(x)$ required to clear denominators and ensure that $f_R(x)$ is actually a polynomial.

A formal treatment of a general quadratic transformation, associated to a quadratic rational expression $R(x)$,
is encumbered by some technicalities.
A harmless one is a scalar factor ambiguity in $f_R(x)$ upon writing
$R(x)=g(x)/h(x)$ in an equivalent form $(ag(x))/(ah(x))$.
One may resolve this by including a normalization to the unique monic scalar multiple in the definition of $f_R(x)$,
but we rather not do so as it may create other issues.

More disturbing is the fact that in some cases a quadratic transformation may not double the degree of a polynomial, as seen, for example, in
$x^n\mapsto (x^2)^n\cdot(1/x^2)^n=1$ when $g(x)=1$ and $h(x)=x^2$.
More generally, $\deg f_R=2\deg f$ unless, in self-explanatory projective language, $(g/h)(\infty)$ is a root of $f$.
Written out explicitly, that occurs exactly when $h_2\neq 0$ and $f(g_2/h_2)=0$,
and hence cannot occur for $f$ irreducible with $\deg(f)>1$.
We generally work on this assumption, which was also made in~\cite{Ahmadi:Carlitz}.
However, we will consider polynomials $f$ of degree one in the last part of Section~\ref{sec:counting} (after the proof of Theorem~\ref{thm:Ahmadi}),
and allow possibly reducible polynomials $f$ in Section~\ref{sec:explicit}.
In both instances we will explain how to deal with the resulting issue of a possible drop in degree.
A related issue is that, when $\deg f_R<2\deg f$ only, the transformed polynomial $f_R$ may be irreducible without $f$ being irreducible:
with $R(x)=1/x^2$ as in the previous example, the transformation takes the reducible polynomial $xf(x)$ to $f_R(x)$, which may be irreducible.

The key to our proof of Ahmadi's result in~\cite{Ahmadi:Carlitz} is that any quadratic rational expression $R(x)=g(x)/h(x)$ can be brought to a simple special form
by {\em pre-} and {\em post-composition} with certain invertible transformations of the form
$x\mapsto(ax+b)/(cx+d)$, that is, elements of the {\em M\"obius group}.
Recall that the M\"obius group, over a field $K$, is isomorphic with the projective general linear group $\PGL(2,K)$,
with the image of the matrix
$\left[\begin{smallmatrix}
a&b\\c&d
\end{smallmatrix}\right]$
in $\PGL(2,K)$ corresponding to the {\em M\"obius transformation} of the previous sentence.
Because the M\"obius group is generated by
the affine maps $x\mapsto ax+b$ (with $a\in K^\ast$ and $b\in K$), and the inversion map $x\mapsto 1/x$,
reducing $R(x)$ to a special form can be done by repeated and appropriate use of only those maps, as we show
in Theorem~\ref{prop:canonical-deg2} below.

Before doing that we show that our goal of counting the irreducible polynomials of the form
$f_R(x)=h(x)^{\deg f}\cdot f\bigl(g(x)/h(x)\bigr)$ is (essentially) not affected by composing
$g(x)/h(x)$, on either side, with maps of those two types.

\begin{lemma}\label{lemma:unaffected}
Let $R(x)=g(x)/h(x)$ be a quadratic rational expression over $\F_q$, and fix $n>1$.
Then the number of irreducible polynomials in $\F_q[x]$
of the form $f_R(x)=h(x)^{\deg f}\cdot f\bigl(g(x)/h(x)\bigr)$ for some irreducible $f\in\F_q[x]$ of degree $n$,
does not change upon composing the quadratic expression
$g(x)/h(x)$, on either side, with affine maps or the inversion map.
\end{lemma}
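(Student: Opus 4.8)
The plan is to show that post-composing $R$ with a M\"obius map corresponds, under $f\mapsto f_R$, to a M\"obius action on the \emph{input} $f$, while pre-composing $R$ corresponds to a M\"obius action on the \emph{output} $f_R$; and that any such M\"obius action is a degree-preserving bijection on irreducible polynomials of a fixed degree. Since affine maps and the inversion map are particular M\"obius transformations, it suffices to prove the invariance for composition with an arbitrary $\mu\in\PGL(2,\F_q)$ on each side.

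First I would record the M\"obius action on polynomials: for $\mu$ with representative matrix $\left[\begin{smallmatrix}a&b\\c&d\end{smallmatrix}\right]$ and a polynomial $P$ of degree $m$, set $P^\mu(x):=(cx+d)^m P\bigl((ax+b)/(cx+d)\bigr)$. The roots of $P^\mu$ are the $\mu^{-1}$-images of the roots of $P$, and since $\mu$ has coefficients in $\F_q$ it commutes with the Frobenius $x\mapsto x^q$ and hence permutes Galois orbits in $\overline{\F_q}\cup\{\infty\}$ bijectively. Consequently, for $P$ irreducible of degree $m>1$---whose roots lie in $\F_{q^m}\setminus\F_q$ and therefore avoid the single point $\mu(\infty)\in\F_q\cup\{\infty\}$---the polynomial $P^\mu$ again has degree exactly $m$ and is, up to a nonzero scalar, monic irreducible; the assignment $P\mapsto P^\mu$ is a bijection of the monic irreducibles of degree $m$, with inverse $Q\mapsto Q^{\mu^{-1}}$, and is multiplicative up to scalars on polynomials whose roots avoid $\F_q\cup\{\infty\}$.

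The two computational identities form the core. For post-composition by $\nu=\left[\begin{smallmatrix}a&b\\c&d\end{smallmatrix}\right]$, writing $\nu\circ R=\tilde g/\tilde h$ with $\tilde g=ag+bh$ and $\tilde h=cg+dh$ (an invertible linear change, so $\tilde g,\tilde h$ stay coprime with $\max(\deg\tilde g,\deg\tilde h)=2$), a direct substitution using $cR+d=\tilde h/h$ gives $f_{\nu\circ R}=(f^\nu)_R$ up to the harmless scalar factor. For pre-composition by $\mu$, clearing the denominator $(cx+d)^2$ from $g(\mu(x))$ and $h(\mu(x))$ yields coprime quadratics $\hat g,\hat h$ (again $\max=2$, since $\mu$ is invertible), and the computation $\,(cx+d)^{2n}f_R(\mu(x))=\hat h^{\,n}f(\hat g/\hat h)\,$ shows $f_{R\circ\mu}=(f_R)^\mu$ up to scalar. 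The key point, which is where the hypothesis $n>1$ enters, is that $f_R$ has degree exactly $2n$ with all its roots in $\overline{\F_q}\setminus(\F_q\cup\{\infty\})$: these roots are the $R$-preimages of the roots of $f$, which lie in $\F_{q^n}\setminus\F_q$, so the bad point $\mu(\infty)\in\F_q\cup\{\infty\}$ is never a root of $f_R$ and no degree drop occurs in $(f_R)^\mu$.

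Assembling these, post-composition replaces the family $\{f_R : f\ \text{irreducible of degree}\ n\}$ by $\{(f^\nu)_R\}$; since $f\mapsto f^\nu$ merely permutes the irreducibles of degree $n$, this is the very same set of polynomials up to scalars, so the count of irreducible members is unchanged. Pre-composition replaces it by $\{(f_R)^\mu\}$, and because $P\mapsto P^\mu$ is a degree-preserving, multiplicative-up-to-scalar bijection on degree-$2n$ polynomials whose roots avoid $\F_q\cup\{\infty\}$, it sends $f_R$ to an irreducible exactly when $f_R$ is irreducible, and does so injectively; hence it restricts to a bijection between the irreducible members of the two families, again preserving the count. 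I expect the main obstacle to be the bookkeeping that rules out any loss of degree under the M\"obius action---verifying that the relevant roots always avoid $\F_q\cup\{\infty\}$, which is exactly what $n>1$ buys---together with checking that composition keeps $R$ a genuine quadratic rational expression (coprimality and $\max(\deg g,\deg h)=2$ preserved), so that the identities above are literal equalities up to scalar rather than up to a spurious polynomial factor.
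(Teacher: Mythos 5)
Your proposal is correct, and it shares the paper's key structural idea: post-composition of $R$ acts on the \emph{inputs} $f$ while pre-composition acts on the \emph{outputs} $f_R$, via the identities $f_{\nu\circ R}=(f^\nu)_R$ and $f_{R\circ\mu}=(f_R)^\mu$, and both actions preserve the count of irreducibles. The difference lies in how much you prove and by what means. The paper works only with the generators named in the statement: pre-composition with an affine map is just the substitution $x\mapsto ax+b$ applied to $f_R$; pre-composition with inversion replaces $f_R$ by its reciprocal polynomial (exactness of this uses $\deg f_R=2\deg f$, which is where $n>1$ first enters); and the two post-compositions correspond to the input bijections $f\mapsto f(ax+b)$ and $f\mapsto x^{\deg f}f(1/x)$, where $n>1$ enters a second time to exclude the exceptional input $f(x)=x$. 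You instead handle an arbitrary $\mu\in\PGL(2,\F_q)$ uniformly, proving that $P\mapsto P^\mu$ permutes the irreducibles of each fixed degree $m>1$ because $\mu$ commutes with the Frobenius and the roots of such irreducibles avoid $\F_q\cup\{\infty\}$; the same root-avoidance observation, applied to $f_R$, disposes of all degree-drop issues at once. What your route buys is uniformity and generality: no case-by-case analysis, the lemma comes out proved verbatim for the whole M\"obius group rather than for a generating set, and the degree bookkeeping is handled by one clean Galois-theoretic remark. What the paper's route buys is economy: only elementary facts about substitutions and reciprocal polynomials are needed, which suffices because the subsequent reduction in Theorem~\ref{prop:canonical-deg2} is carried out using only affine maps and inversion anyway.
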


\begin{proof}
Note that $\deg f_R=2\deg f=2n$ because of our assumption that $f$ is irreducible of degree $n>1$.

Pre-composition with (invertible) affine maps
clearly does not affect the irreducibility of
$f_R(x)=h(x)^{\deg f}\cdot f\bigl(g(x)/h(x)\bigr)$.

Pre-composing $g(x)/h(x)$ with the inversion map before applying the quadratic transformation to $f$ results in
$(x^2h(1/x))^{\deg f}\cdot f\bigl(g(1/x)/h(1/x)\bigr)$.
This coincides with the {\em reciprocal} polynomial $x^{\deg f_R}f_R(1/x)$ of $f_R(x)$
precisely because $\deg f_R=2\deg f$.

Post-compositions do not generally preserve irreducibility of $f_R(x)$.
However, if $\tilde R(x)=ag(x)/h(x)+b$
then the map $f(x)\mapsto\tilde f(x)=f(ax+b)$
is a degree-preserving bijection from the set of irreducible polynomials $f(x)$ such that
$f_{\tilde R}(x)$ is irreducible, onto the set of irreducible polynomials $\tilde f$ such that
$\tilde f_R(x)$ is irreducible, because
$f_{\tilde R}(x)=\tilde f_R(x)$.

Similarly, if $\tilde R(x)=h(x)/g(x)$
then the map $f(x)\mapsto\tilde f(x)=x^{\deg f}f(1/x)$
is a degree-preserving bijection from the set of irreducible polynomials $f$ with $\deg f>1$ such that
$f_{\tilde R}(x)$ is irreducible, onto the set of irreducible polynomials $\tilde f$ with $\deg\tilde f>1$ such that
$\tilde f_R(x)$ is irreducible,
again because
$f_{\tilde R}(x)=\tilde f_R(x)$.
Here the assumption $\deg f>1$ serves to exclude the exceptional case $f(x)=x$.
\end{proof}

The following result and its proof show how to bring $g(x)/h(x)$ to particularly simple forms
through the transformations described in Lemma~\ref{lemma:unaffected}.
This reduction can be done over an arbitrary field $K$.

\begin{theorem}\label{prop:canonical-deg2}
Let $K$ be a field, and let $g,h$ be coprime polynomials in $K[x]$ with
$\max(\deg g,\deg h)=2$.
Then the quadratic rational expression $g(x)/h(x)$, upon composing on both
sides with affine maps $x\mapsto ax+b$, and the inversion map $x\mapsto 1/x$, repeatedly and in some order, can be brought to the form
$x+\sigma x^{-1}$ for some $\sigma\in K^{\ast}$, or, when $\charac K=2$, to the form $x^2$.
\end{theorem}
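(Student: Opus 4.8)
The plan is to translate the problem into the language of the degree-two field extension $K(x)/K(R)$ and to read off the normal form from its Galois (or inseparability) structure. Since $\max(\deg g,\deg h)=2$, pre-composition by an affine map or the inversion map amounts to changing the variable $x$ by a M\"obius transformation, while post-composition replaces $R$ by a M\"obius function of $R$; as these two types of maps generate the whole group $\PGL(2,K)$, I may freely compose $R$ on either side with an arbitrary M\"obius transformation. The key observation is that $[K(x):K(R)]=\deg R=2$, so two rational expressions generate the same subfield $K(R)\subseteq K(x)$ if and only if they differ by post-composition with a M\"obius transformation; thus the target form is governed only by the extension $K(x)/K(R)$, and I split into the separable and the (characteristic-two) purely inseparable cases.

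In the purely inseparable case, which can occur only when $\charac K=2$, the degree-two extension $K(x)/K(R)$ satisfies $x^2\in K(R)$. Since then $K(x^2)\subseteq K(R)$ and $[K(x):K(x^2)]=2=[K(x):K(R)]$, we get $K(R)=K(x^2)$. Hence $R$ is a M\"obius function of $x^2$, and post-composing with the inverse M\"obius transformation brings $R$ to the form $x^2$.

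In the separable case the extension is Galois of degree two, with a nontrivial deck transformation $\tau$ that is an involution in $\PGL(2,K)$; by construction $R$ is $\tau$-invariant and $K(R)=K(x)^{\tau}$. The heart of the argument is the claim that every involution of $\PGL(2,K)$ is conjugate, by a M\"obius transformation, to the map $x\mapsto\sigma x^{-1}$ for some $\sigma\in K^{\ast}$. I would prove this through the fixed-point structure of $\tau$: when $\charac K\neq 2$ the involution has two fixed points, forming either a $K$-rational pair or a conjugate quadratic pair, and moving them to $\pm\sqrt{\sigma}$ realises $\tau$ as $x\mapsto\sigma x^{-1}$ with $\sigma$ a square in the first case and a non-square in the second; when $\charac K=2$ the involution has a single fixed point, and the analogous normalisation again yields $x\mapsto\sigma x^{-1}$. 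Pre-composing $R$ by the conjugating M\"obius transformation replaces $\tau$ by $x\mapsto\sigma x^{-1}$, and a direct computation gives $K(x)^{\tau}=K(x+\sigma x^{-1})$. Since now $K(R)=K(x+\sigma x^{-1})$, the expressions $R$ and $x+\sigma x^{-1}$ differ by a M\"obius transformation, and post-composing with its inverse produces the desired form $x+\sigma x^{-1}$.

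I expect the main obstacle to be the classification of involutions of $\PGL(2,K)$ over an arbitrary field, especially the characteristic-two subtleties: distinguishing the separable involution from the purely inseparable situation, handling the single (and possibly inseparable) fixed point, and tracking rationality carefully enough to land on a genuine $\sigma\in K^{\ast}$. A more pedestrian alternative, trading conceptual clarity for bookkeeping, is to argue directly by cases on $\deg h$: after using inversion to make the numerator have degree two, one clears the linear term by an affine pre-composition (completing the square when $\charac K\neq 2$, or reaching an Artin--Schreier form $x^2+x$ when $\charac K=2$) and then normalises the result, the only delicate point again being whether the relevant fibre consists of two $K$-rational points or a conjugate pair.
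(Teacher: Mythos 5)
Your proposal is correct in substance, but it takes a genuinely different route from the paper's own proof. The paper argues by elementary, explicit coefficient manipulation: it first isolates the degenerate case $g_1=h_1=0$ (which in characteristic two leads to the form $x^2$), and otherwise uses the substitutions $x\mapsto x+1$, inversion, and subtraction of constants to strip the quadratic term from the denominator and land on $x+\sigma x^{-1}$. You instead read everything off the degree-two extension $K(x)/K(R)$: if it is purely inseparable (possible only in characteristic two) then $K(R)=K(x^2)$ and post-composition gives the form $x^2$; if it is separable it is Galois, its deck transformation is an involution in $\PGL(2,K)$, and the conjugacy classification of involutions --- every one is conjugate to $x\mapsto\sigma/x$, whose fixed field is $K(x+\sigma x^{-1})$ --- gives the other form. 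The two supporting facts you invoke, that $[K(x):K(R)]=\deg R$ and that two rational expressions generate the same subfield of $K(x)$ exactly when they differ by post-composition with a M\"obius transformation, are both sound (the first is cited in the paper from Cohn; the second follows from multiplicativity of degree under composition). What your approach buys: it explains \emph{why} these are the normal forms, and it yields for free the classification of quadratic expressions by $K^\ast/(K^\ast)^2$, plus one extra class in characteristic two, which the paper derives separately after the theorem via the discriminant of $g'h-gh'$; it also anticipates the machinery the paper itself deploys later (the fixed-field argument in the proof of Lemma~\ref{lemma:invariant_generalized}, and the discussion of conjugacy of involutions, after Beauville, in Section~\ref{sec:variations}). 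What the paper's proof buys: it is self-contained, constructive, and visibly uses only affine maps and inversion, exactly as the statement requires.

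One step you should tighten is the characteristic-two part of the conjugacy claim, which you rightly flagged as the main obstacle: there an involution has a single, possibly inseparable, fixed point and is \emph{not} determined by it (for instance all translations $x\mapsto x+b$ fix only $\infty$), so ``the analogous normalisation'' via fixed points does not go through as stated. The uniform repair is linear algebra rather than geometry: any involution in $\PGL(2,K)$ lifts to a non-scalar matrix $A$ with $\operatorname{tr}A=0$ (Cayley--Hamilton forces this in every characteristic), and a non-scalar $2\times 2$ matrix is cyclic, hence conjugate over $K$ to the companion matrix of its characteristic polynomial $y^2+\det A$; that companion matrix is precisely the M\"obius map $x\mapsto\sigma/x$ with $\sigma=-\det A\neq 0$. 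With this substitute for the fixed-point argument, your proof is complete in all characteristics.
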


\begin{proof}
Write $g(x)=g_2x^2+g_1x+g_0$
and $h(x)=h_2x^2+h_1x+h_0$.
Most of our work will serve to remove the quadratic term from the denominator,
while leaving a linear term if that is possible.

We first deal with the rather special case where $g_2h_1=g_1h_2$ and $g_1h_0=g_0h_1$.
Because $g(x)$ and $h(x)$ are coprime these conditions imply $g_1=h_1=0$, and hence
$g(x)/h(x)=(g_2x^2+g_0)/(h_2x^2+h_0)$.
Replacing $g(x)/h(x)$ with $g(x+1)/h(x+1)$ will get us away from
this special situation, except when $K$ has characteristic two.
In that case,
if $h_2=0$ then
$(h_0/g_2)\cdot\bigl(g(x)/h(x)-g_0/h_0\bigr)=x^2$,
as desired.
If $K$ has characteristic two and $h_2\neq 0$ then
$1/\bigl(g(x)/h(x)-g_2/h_2\bigr)$
has no quadratic term at the denominator,
and proceeding as in the previous case we can reach the desired form $x^2$.

As we mentioned, if the characteristic of $K$ is not two then, possibly after substituting $x$ with $x+1$,
we may arrange for at least one of the conditions $g_2h_1\neq g_1h_2$ and $g_1h_0\neq g_0h_1$ to hold.
Possibly after replacing $g(x)/h(x)$ with $g(1/x)/h(1/x)$
we may assume that the former holds.
If $h_2=0$ then our expression has the form
$(g'_2x^2+g'_1x+g'_0)/(h'_1x+h'_0)$, with $h'_1\neq 0$.
Otherwise, $1/\bigl(g(x)/h(x)-g_2/h_2\bigr)$
will have that form.

Finally, applying the substitution $x\mapsto x-h'_0/h'_1$
and then multiplying the resulting expression by a suitable constant brings it to the form
$(x^2+g''_1x+g''_0)/x$,
and then
$(x^2+g''_1x+g''_0)/x-g''_1=x+\sigma/x$,
where
$\sigma=g''_0\in K^{\ast}$.
\end{proof}

As a distinguished example, over a field $K$ of characteristic not two the procedure described in the above proof brings $g(x)/h(x)=x^2$ to the form $x+x^{-1}$.
This can also be achieved in one go as the composition
\[
\frac{2x+2}{-x+1}\circ x^2\circ\frac{x-1}{x+1}=x+\frac{1}{x},
\]
which is essentially an application of the {\em Cayley transform.}
This equivalence of $x^2$ and $x+x^{-1}$ explains why the number of irreducible monic polynomials in $\F_q[x]$, for $q$ odd, having the form $f(x^2)$ and degree $2n$,
which can be read off~\cite[Theorem~3]{Cohen:irreducible} as a special case,
coincides with the number of srim polynomials of the same degree.

Given a quadratic rational expression $g(x)/h(x)$, we can tell which of the special forms of Theorem~\ref{prop:canonical-deg2}
it can be brought to without actually performing the full reduction procedure, but rather considering the derivatives $g'$ and $h'$ of $g$ and $h$.
In fact, both $g'$ and $h'$ vanish exactly when $\charac{K}=2$ and $g(x)/h(x)$ can be brought to the form $x^2$.
Assuming this is not the case, we know that $g(x)/h(x)$ can be brought to the form $x+\sigma x^{-1}$ for some $\sigma\in K^{\ast}$,
and we only need to find an appropriate value of $\sigma$.
Because $a(x/a+\sigma a/x)=x+\sigma a^2/x$, the value of $\sigma$ can be multiplied by any nonzero square.
Consider the polynomial $g'h-gh'$, which is at most quadratic as its quadratic term equals $(g_2h_1-g_1h_2)x^2$.
In case this quadratic term vanishes, replace $g(x)/h(x)$ with $\bigl(x^2g(1/x)\bigr)/\bigl(x^2h(1/x)\bigr)$
as in the proof of Theorem~\ref{prop:canonical-deg2}, and then the new $g'h-gh'$ will be quadratic.
Then we may take as $\sigma$ the discriminant of $g'h-gh'$.
This is because the rest of the proof only used post-compositions with inversion or affine maps,
which replace $g'h-gh'$ with a nonzero scalar multiple, and pre-composition with affine maps,
whose effect on $g'h-gh'$ does not change its discriminant (up to squares).

In conclusion, under the equivalence which is implicit in Theorem~\ref{prop:canonical-deg2},
and denoting by $(K^\ast)^2$ the set of squares in $K^\ast$,
quadratic rational expressions $g(x)/h(x)$ are naturally classified by the quotient group $K^\ast/(K^\ast)^2$ in characteristic not two,
and by $K^\ast/(K^\ast)^2$ plus one element in characteristic two, with the extra element occurring when $g(x)/h(x)\in K(x^2)$.

In particular, when $K$ is a finite field $\F_q$ and $q$ is odd,
any quadratic rational expression can be brought to precisely one of the forms
$x+x^{-1}$ and $x+\sigma_0 x^{-1}$, where $\sigma_0$ is a fixed nonsquare in $\F_q$.
When $K=\F_q$ with $q$ even,
any quadratic rational expression can be brought to precisely one of the forms $x+x^{-1}$ and $x^2$.
However, the latter form contributes no irreducible polynomials, as $f(x^2)$
is the square of a polynomial in $\F_q[x]$ if $q$ is even.

\section{Counting irreducible polynomials obtained through a quadratic transformation}\label{sec:counting}

Theorem~\ref{prop:canonical-deg2}, together with the discussion which follows it,
reduces the problem of counting the irreducible polynomials of the form $f\bigl(g(x)/h(x)\bigr)$
to the cases where the quadratic expression $g(x)/h(x)$
has the special form $x+\sigma x^{-1}$.
Thus, we see that about {\em half} the possibilities for $g(x)/h(x)$ when $q$ is odd
(those where $\sigma$ is a square in $\F_q^\ast$), and {\em all} the possibilities when $q$ is even,
have already been dealt with by Carlitz's count of self-reciprocal irreducible polynomials.
In particular, we can already conclude that, for $q$ even and
$g(x)/h(x)\not\in \F_q(x^2)$, the number of irreducible monic
polynomials of degree $2n$ in $\F_q[x]$ having the form $f\bigl(g(x)/h(x)\bigr)$
is still given by Carlitz's formula for the number of self-reciprocal irreducible monic polynomials of the same degree.

The missing half possibilities for $q$ odd, which occur when $\sigma$ is not a square in $\F_q^\ast$,
can be covered with a simple extension of any of the various proofs
for Carlitz's formula which are available, found in~\cite{Carlitz:srim,Cohen:irreducible,Meyn}.
We have chosen a presentation close to that of~\cite{Meyn}.

We start with a slight extension of the well-known fact that any self-reciprocal polynomial of degree $2n$ over a
field can be expressed as $x^n\cdot f(x+x^{-1})$ for some polynomial $f(x)$ of degree $n$.
This simple but crucial fact can be proved in many ways, and in view of generalizations we review several lines of proof in Section~\ref{sec:variations},
including a constructive proof based on Dickson polynomials.
Here we present what we feel is the most elementary proof.

\begin{lemma}\label{lemma:invariant_quadratic}
Let $K$ be a field, let $\sigma\in K^{\ast}$, and let $F\in K[x]$ be a polynomial of even degree $2n$.
Then $x^{2n}\cdot F(\sigma/x)=\sigma^nF(x)$ holds if, and only if,
$F(x)=x^n\cdot f(x+\sigma/x)$ for some $f\in K[x]$ of degree $n$.
\end{lemma}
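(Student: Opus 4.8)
The plan is to prove the two implications separately, with the converse (``if'') being a short direct substitution and the forward (``only if'') direction carrying the real content. For the converse, assume $F(x)=x^n\cdot f(x+\sigma/x)$ with $\deg f=n$. I would substitute $\sigma/x$ for $x$ and exploit the fixed-point identity $\sigma/(\sigma/x)=x$, so that the inner argument is preserved: $(\sigma/x)+\sigma/(\sigma/x)=x+\sigma/x$. This yields $F(\sigma/x)=(\sigma/x)^n f(x+\sigma/x)=\sigma^n x^{-2n}F(x)$, and multiplying by $x^{2n}$ gives the asserted relation $x^{2n}F(\sigma/x)=\sigma^n F(x)$. Along the way I would record that $F$ is genuinely a polynomial of degree $2n$: the prefactor $x^n$ clears the negative powers of $f(x+\sigma/x)$ (whose lowest term is a multiple of $x^{-n}$), while its top term is $a_n x^{2n}$ with $a_n$ the leading coefficient of $f$.

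For the forward direction I would pass to the Laurent polynomial $G(x):=x^{-n}F(x)$, supported on the exponents $-n,\dots,n$, and first show that the hypothesis is equivalent to the honest invariance $G(\sigma/x)=G(x)$ under $x\mapsto\sigma/x$. Writing $F(x)=\sum_{j=0}^{2n}c_j x^j$, the equation $x^{2n}F(\sigma/x)=\sigma^n F(x)$ amounts, coefficient by coefficient, to $c_j=\sigma^{n-j}c_{2n-j}$; in terms of $G=\sum_{k=-n}^{n}b_k x^k$ with $b_k=c_{k+n}$ this reads $b_{-k}=\sigma^k b_k$, which is exactly the condition for $G(\sigma/x)=G(x)$.

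The heart of the argument is then the following claim, proved by induction on $m$: every Laurent polynomial supported on $[-m,m]$ and invariant under $x\mapsto\sigma/x$ equals $f(u)$ for some $f\in K[x]$ with $\deg f\le m$, where $u:=x+\sigma/x$. The base case $m=0$ is a constant. For the inductive step, I would use that $u$, hence $u^m$, is invariant, and that $u^m=(x+\sigma/x)^m$ has top term $x^m$ and bottom term $\sigma^m x^{-m}$; if $b_m$ denotes the top coefficient of the given $G$, invariance forces its bottom coefficient to be $\sigma^m b_m$, so $G-b_m u^m$ is invariant and supported on $[-(m-1),m-1]$, and the induction hypothesis applies. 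Applied with $m=n$, and noting $b_n=c_{2n}\neq 0$ since $\deg F=2n$, this produces $f$ of degree exactly $n$ with $G=f(x+\sigma/x)$, whence $F=x^n f(x+\sigma/x)$. The main obstacle is precisely this invariance-to-polynomial-in-$u$ step; the decisive simplification is recasting the functional equation as genuine invariance of $G$, after which peeling off $b_m u^m$ is elementary and, notably, characteristic-free, never requiring division by $2$.
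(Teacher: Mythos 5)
Your proof is correct, but it follows a genuinely different route from the paper's. The paper handles the hard (``only if'') direction by a dimension count: the assignment $f\mapsto x^n\cdot f(x+\sigma/x)$ is an injective $K$-linear map from the $(n+1)$-dimensional space of polynomials of degree at most $n$ into the space $V$ of polynomials of degree at most $2n$ satisfying the functional equation; since that equation imposes $n$ independent linear conditions ($b_{n-k}=\sigma^k b_{n+k}$ for $0<k\le n$) on the $2n+1$ coefficients, $V$ has dimension $n+1$, so the map is bijective, and the degree statement follows from $\deg F=n+\deg f$. You instead recast the functional equation as honest invariance of the Laurent polynomial $G(x)=x^{-n}F(x)$ under $x\mapsto\sigma/x$ and then peel off the top term $b_m u^m$, $u=x+\sigma/x$, by induction on the support $[-m,m]$, using that $u^m$ has extreme terms $x^m$ and $\sigma^m x^{-m}$. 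Both arguments are characteristic-free and both handle the issue of pinning $\deg f$ to exactly $n$ correctly (the paper via $\deg F=n+\deg f$ after the bijectivity, you via $b_n=c_{2n}\neq 0$). What each buys: the paper's dimension count is shorter and avoids any recursion, but is existential; your peeling argument is constructive and yields an algorithm for computing $f$ from $F$. In fact, the paper itself acknowledges your line of proof in Section~\ref{sec:variations}, where it notes that an induction as in Jungnickel's book (for $\sigma=1$) recovers $f$ algorithmically, and that the recursion can be made fully explicit via Dickson polynomials, $f(y)=b_n+\sum_{k=1}^n b_{n+k}D_k(y,\sigma)$ --- your induction is essentially an implicit computation of that Dickson expansion.
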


\begin{proof}
If $f\in K[x]$ has degree $n$, then
$F(x)=x^n\cdot f(x+\sigma/x)$
is a polynomial of degree $2n$,
and clearly satisfies
$x^{2n}\cdot F(\sigma/x)=\sigma^nF(x)$.

We can prove the converse implication by a simple linear algebra argument provided we release the even integer $2n$ from being equal to $\deg(F)$, as follows.
Given a non-negative integer $n$, the assignment $f\mapsto F$, where $F(x)=x^n\cdot f(x+\sigma/x)$,
defines an injective $K$-linear map from the $(n+1)$-dimensional space of
polynomials $f\in K[x]$ of degree at most $n$,
into the space $V$ of polynomials $F\in K[x]$ having degree at most $2n$
and satisfying the condition $x^{2n}\cdot F(\sigma/x)=\sigma^nF(x)$.
Written in terms of the coefficients of $F(x)=\sum_{k=0}^{2n}b_kx^k$
the condition amounts to $b_{n-k}=b_{n+k}\sigma^k$ for $0<k\le n$.
Because these $n$ equations are linearly independent
we see that $V$ has dimension $n+1$,
and so the linear map under consideration is bijective.
Because $\deg(F)=n+\deg(f)$, if $\deg(F)=2n$ then the corresponding $f$ satisfies $\deg(f)=n$ as required.
\end{proof}

As in the special case $\sigma=1$ of self-reciprocal polynomials,
the condition $x^{2n}\cdot F(\sigma/x)=\sigma^nF(x)$
for a polynomial $F$ of degree $2n$ can be checked from knowledge of all the roots of $F$
in a splitting field with their multiplicities.
For simplicity assume $F(x)$ coprime with $x^2-\sigma$, which will be satisfied in our application below.
Then the condition $x^{2n}\cdot F(\sigma/x)=\sigma^nF(x)$ is equivalent to
$\sigma/\xi$ being a root of $F$ along with each root $\xi$ of $F$, and of the same multiplicity.
This is easily seen upon writing $F(x)=\prod_{i=1}^{2n}(x-\xi_i)$ over a splitting field, once assumed monic as we may.
The proof of a more general fact will be given in Lemma~\ref{lemma:checking}.

Now we specialize $K$ to a finite field $\F_q$.
The next result we need is the following slight generalization of~\cite[Theorem~1]{Meyn}, which was the case $\sigma=1$.

\begin{theorem}\label{thm:Meyn}
Let $\sigma\in\F_q^{\ast}$, and let $\mathcal{I}_{\sigma}$ be the set of all
monic irreducible polynomials $F\in\F_q[x]$ of even degree
which satisfy
$x^{2n}\cdot F(\sigma/x)=\sigma^nF(x)$,
where $2n=\deg F$.
Then the polynomial
\[
H(x)=\frac{x^{q^n+1}-\sigma}{(x^2-\sigma,x^{q^n-1}-1)}
\]
equals the product of all $F\in\mathcal{I}_{\sigma}$
of degree a divisor of $2n$ which does not divide $n$.
\end{theorem}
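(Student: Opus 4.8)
The plan is to prove the two polynomials agree by comparing their roots in a fixed algebraic closure $\barFq$, which is legitimate because $x^{q^n+1}-\sigma$ is separable: its derivative is $(q^n+1)x^{q^n}=x^{q^n}$, and this shares no root with $x^{q^n+1}-\sigma$ since $\sigma\neq 0$. So every irreducible factor of $x^{q^n+1}-\sigma$ occurs to the first power, and it suffices to match root sets. A root $\xi$ satisfies $\xi^{q^n}=\sigma/\xi$; since $\sigma\in\F_q$ is fixed by the $q$-power Frobenius $\phi$, one computes $\xi^{q^{2n}}=(\sigma/\xi)^{q^n}=\sigma/(\sigma/\xi)=\xi$, so every root lies in $\F_{q^{2n}}$, and the involution $\psi\colon\xi\mapsto\sigma/\xi$ coincides on each root with $\phi^{n}$. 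In particular $\psi$ permutes each Frobenius orbit, hence permutes the roots of each irreducible factor.

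First I would analyse the degree $d$ of the minimal polynomial of a root $\xi$, which divides $2n$. Writing $\sigma/\xi=\xi^{q^j}$ with $0\le j<d$ and applying $\phi^{j}$ gives $\xi^{q^{2j}}=\xi$, so $2j\equiv 0\pmod d$ and $j\in\{0,d/2\}$. If $j=0$ then $\xi^2=\sigma$, which forces $d\le 2$ and $d\mid n$; these \emph{degenerate} roots are exactly the roots of $x^2-\sigma$ lying in $\F_{q^n}^{\ast}$, which are precisely the common roots captured by the denominator $(x^2-\sigma,\,x^{q^n-1}-1)$. Otherwise $d$ is even and $\sigma/\xi=\xi^{q^{d/2}}$; comparing with $\sigma/\xi=\xi^{q^n}$ gives $n\equiv d/2\pmod d$, so $d\nmid n$. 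Since such a factor $F$ is coprime with $x^2-\sigma$ (its roots have $d\nmid n$, whereas those of $x^2-\sigma$ in $\F_{q^n}$ have $d\mid n$), the root-pairing criterion recorded after Lemma~\ref{lemma:invariant_quadratic} shows $F\in\mathcal{I}_\sigma$, of even degree $d\mid 2n$ with $d\nmid n$.

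Conversely, given $F\in\mathcal{I}_\sigma$ of degree $d=2m$ with $d\mid 2n$ and $d\nmid n$, I would first note that $F$ is automatically coprime with $x^2-\sigma$: an even-degree irreducible sharing a root with $x^2-\sigma$ would have to equal $x^2-\sigma$, which fails the defining identity of $\mathcal{I}_\sigma$ in odd characteristic (one checks $x^2(\sigma/x)^2-\sigma\cdot x^2$ equals $-\sigma(x^2-\sigma)$, not $\sigma(x^2-\sigma)$) and is reducible in characteristic two. Root-pairing then gives $\sigma/\xi=\xi^{q^{m}}$ for a root $\xi$, while $d\mid 2n$ and $d\nmid n$ force $n\equiv m\pmod{2m}$; hence $\xi^{q^n}=\xi^{q^{m}}=\sigma/\xi$, so $\xi^{q^n+1}=\sigma$ and $F$ divides $x^{q^n+1}-\sigma$. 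Being coprime with $x^2-\sigma$, the factor $F$ survives division by the gcd, so $F\mid H$.

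To finish, since $x^{q^n-1}-1$ is separable the denominator $(x^2-\sigma,\,x^{q^n-1}-1)$ is itself separable and has exactly the degenerate roots of $x^{q^n+1}-\sigma$ as its roots; removing it therefore strips precisely the factors of degree dividing $n$ and leaves each remaining factor to the first power, so the two descriptions of $H$ coincide. I expect the main obstacle to be the congruence bookkeeping — translating the divisibility conditions $d\mid 2n$, $d\nmid n$ into the statement $n\equiv m\pmod{2m}$ and back — together with the verification in characteristic two that the \emph{inseparable} polynomial $x^2-\sigma=(x+\sqrt\sigma)^2$ still matches the separable gcd, its single root $\sqrt\sigma\in\F_q$ being a simple root of $x^{q^n+1}-\sigma$.
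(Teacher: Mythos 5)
Your proposal is correct and follows essentially the same route as the paper's proof: both work with the Frobenius orbits of the roots of $x^{q^n+1}-\sigma$ inside $\F_{q^{2n}}$, use the computation $\sigma/\xi=\xi^{q^k}\Rightarrow\xi^{q^{2k}}=\xi$ to pin down the pairing as $\phi^{d/2}$ (equivalently the paper's $k=n/d$), and invoke the root-pairing criterion together with coprimality to $x^2-\sigma$ to pass between irreducible factors of $H$ and members of $\mathcal{I}_\sigma$ in both directions. The differences (your explicit separability check, the bookkeeping via $d\mid 2n$, $d\nmid n$ versus the paper's $2n/d$ with $d$ an odd divisor of $n$, and your direct verification that $x^2-\sigma\notin\mathcal{I}_\sigma$) are cosmetic.
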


Note that the denominator in the above expression for $H(x)$ equals the greatest common divisor
$(x^2-\sigma,x^{q^n+1}-\sigma)$, and hence divides the numerator.
Also, its degree equals the number of distinct square roots of $\sigma$ in $\F_{q^n}$.
Consequently,
when $q$ is odd we have
$H(x)=(x^{q^n+1}-\sigma)/(x^2-\sigma)$
unless $n$ is odd and $\sigma$ is not a square in $\F_q$, in which case
$H(x)=x^{q^n+1}-\sigma$.
When $q$ is even we have
$H(x)=(x^{q^n+1}-\sigma)/(x-\sigma^{q/2})$.

\begin{proof}[Proof of Theorem~\ref{thm:Meyn}]
The field $\F_{q^{2n}}$ contains a splitting field for $H(x)$.
The roots of $H(x)$ are all distinct, and they are exactly all elements of $\F_{q^{2n}}$ such that
$\xi^{q^n}=\sigma/\xi\neq\xi$.
In particular, the orbit of each root of $H(x)$ under the automorphism $\alpha\mapsto\alpha^q$ of $\F_{q^{2n}}$ has
length some divisor of $2n$ which does not divide $n$.
To each orbit there corresponds a monic irreducible factor of $H(x)$ over $\F_q$, having its elements as roots.

If $F(x)$ is an irreducible factor of $H(x)$, hence of degree $2n/d$ with $d$ an odd divisor of $n$,
then for each root $\xi$ of $F$
the element $\xi^{q^n}=\sigma/\xi$ is also a root.
Because all roots of $F$ are necessarily simple,
and because $F(x)$ is coprime with $x^2-\sigma$,
we conclude that $F\in\mathcal{I}_{\sigma}$.

Conversely, if $F\in\mathcal{I}_{\sigma}$ has degree $2n/d$, with $d$ an odd divisor of $n$,
then $F$ has all its roots in $\F_{q^{2n}}$, say
$\xi,\xi^q,\ldots,\xi^{q^{2n/d-1}}$.
The defining condition of $\mathcal{I}_{\sigma}$ implies that $\sigma/\xi$ is also a root, and hence
$\sigma/\xi=\xi^{q^k}$ for some integer $k$ with $0<k<2n/d$.
But then $\xi^{q^{2k}}=(\sigma/\xi)^{q^k}=\sigma/\xi^{q^k}=\xi$, forcing $k=n/d$.
From $\xi^{q^{n/d}}=\sigma/\xi$
and $\xi^{q^{2n/d}}=\xi$
we now infer
$\xi^{q^n}=\xi^{q^{2n/d}}=\sigma/\xi$, and hence
$F(x)$ divides $x^{q^n+1}-\sigma$.
Also, $F$ cannot divide $x^2-\sigma$, otherwise
$\xi^2=\sigma$, whence $\xi^q=\sigma/\xi=\xi$ and so $\xi\in\F_q$,
contrary to the irreducibility of $F$.
\end{proof}

Ahmadi's generalization of Carlitz's result follows from Theorem~\ref{thm:Meyn} through an application of M\"obius inversion.
For the reader's convenience we recall a form of M\"obius inversion which is only slightly more general
than the classical one, see~\cite[Proposition~5.2]{Knopfmacher}.
Given a completely multiplicative function $\chi:\N\to\C$
(that is, a homomorphism of the multiplicative monoid $\N$ of the positive integers into the multiplicative monoid
of the complex numbers),
two functions $f,g:\N\to\C$ satisfy
\[
f(n)=\sum_{d\mid n}\chi(d)g(n/d)
\]
for all $n\in\N$ if, and only if, they satisfy
\[
g(n)=\sum_{d\mid n}\mu(d)\chi(d)f(n/d)
\]
for all $n\in\N$, where $\mu$ is the M\"obius function.
This allows one to invert relations of the form
$f(n)=\sum_{d\mid n,\ \text{$d$ odd}}g(n/d)$,
for example, by taking $\chi(d)=0$ for $d$ even and $\chi(d)=1$ for $d$ odd.
(This special case is~\cite[Theorem~2.7.2]{Jungnickel}.)

\begin{theorem}[Theorem~2 in~\cite{Ahmadi:Carlitz}]\label{thm:Ahmadi}
Let $g,h\in\F_q[x]$ be coprime polynomials with
$\max(\deg g,\deg h)=2$.
Then the number of monic irreducible polynomials $f\in\F_q[x]$ of degree $n>1$
such that
$\bigl(h(x)\bigr)^n\cdot f\bigl(g(x)/h(x)\bigr)$
is irreducible equals
\[
\begin{cases}
0
 &\text{if $q$ is even and $g'=h'=0$,}
\\
\frac{1}{2n}(q^n-1)
 &\text{if $q$ is odd and $n$ is a power of $2$,}
\\
\frac{1}{2n}\sum_{\substack{d\mid n\\\text{$d$ odd}}}
\mu(d)q^{n/d}
 &\text{otherwise.}
\end{cases}
\]
\end{theorem}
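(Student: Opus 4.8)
The plan is to use Lemma~\ref{lemma:unaffected} together with Theorem~\ref{prop:canonical-deg2} to replace $g(x)/h(x)$ by a normal form, and then to extract the count from Theorem~\ref{thm:Meyn} by M\"obius inversion. First I would note that, by Lemma~\ref{lemma:unaffected}, the number to be computed is unaffected when $g(x)/h(x)$ is composed on either side with affine maps or with inversion; Theorem~\ref{prop:canonical-deg2} and the classification following it then let me assume that $g(x)/h(x)$ is either $x+\sigma x^{-1}$ for some $\sigma\in\F_q^\ast$, or, exactly when $q$ is even and $g'=h'=0$, the form $x^2$. The form $x^2$ settles the first line of the statement at once: in characteristic two $f(x^2)=\tilde f(x)^2$, where $\tilde f$ is obtained by applying the inverse Frobenius to the coefficients of $f$, so $f_R=f(x^2)$ is a square and never irreducible, giving the count $0$.

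In the remaining cases $R(x)=x+\sigma x^{-1}$, for which $f_R(x)=x^n\cdot f(x+\sigma x^{-1})$; since here $h(x)=x$ has $h_2=0$, the degree genuinely doubles to $2n$. By Lemma~\ref{lemma:invariant_quadratic} the map $f\mapsto f_R$ is a degree-preserving bijection from monic polynomials of degree $n$ onto the monic polynomials $F$ of degree $2n$ obeying $x^{2n}F(\sigma/x)=\sigma^nF(x)$; and since the quadratic transformation is multiplicative while the degree doubles, $f_R$ is irreducible if and only if $f$ is. Consequently the quantity sought equals the number $P_\sigma(2n)$ of monic irreducible polynomials in $\mathcal{I}_\sigma$ of degree exactly $2n$.

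I would then compute $P_\sigma(2n)$ from Theorem~\ref{thm:Meyn}. Writing $H_n$ for the polynomial $H$ of that theorem, its irreducible factors are exactly the members of $\mathcal{I}_\sigma$ whose degree divides $2n$ but not $n$, and these degrees are precisely the integers $2n/d$ with $d$ ranging over the odd divisors of $n$; comparing degrees gives $\deg H_n=\sum_{d\mid n,\ d\text{ odd}}(2n/d)\,P_\sigma(2n/d)$. Setting $a(m)=2m\,P_\sigma(2m)$ and applying the stated form of M\"obius inversion with $\chi$ the indicator of the odd integers yields $a(n)=\sum_{d\mid n,\ d\text{ odd}}\mu(d)\,\deg H_{n/d}$. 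It then remains to substitute the explicit degrees recorded after Theorem~\ref{thm:Meyn}: $\deg H_m=q^m$ for $q$ even, and for $q$ odd $\deg H_m=q^m-1$, except that $\deg H_m=q^m+1$ when $m$ is odd and $\sigma$ is a non-square in $\F_q$.

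The decisive step, and the one I expect to require the most care, is to show that for $q$ odd the answer is the same whether or not $\sigma$ is a square, which is exactly the phenomenon Ahmadi observed. After substitution the powers $q^{n/d}$ always combine into $\sum_{d\mid n,\ d\text{ odd}}\mu(d)q^{n/d}$, while the remaining constants combine into a multiple of $\sum_{d\mid n,\ d\text{ odd}}\mu(d)$, which is $1$ when $n$ is a power of $2$ and $0$ otherwise (apply $\sum_{e\mid m}\mu(e)=0$ for $m>1$ to the odd part $m$ of $n$). For $q$ even this constant contribution vanishes, producing the third expression in all cases. For $q$ odd the bookkeeping is subtler: using that $n/d$ has the same $2$-adic valuation as $n$ for odd $d$, that a non-square $\sigma\in\F_q$ becomes a square in $\F_{q^m}$ precisely when $m$ is even, and that for $n>1$ a power of two is even, I would check that in both the square and the non-square case the constant correction equals $-1$ when $n$ is a power of $2$ and $0$ otherwise. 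This gives $(q^n-1)/(2n)$ when $n$ is a power of $2$ and the third expression otherwise, matching $\SRIM(2n,q)$ and completing the proof.
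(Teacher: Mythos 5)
Your proposal follows the same route as the paper's proof: reduction to the canonical forms $x+\sigma x^{-1}$ and $x^2$ via Lemma~\ref{lemma:unaffected} and Theorem~\ref{prop:canonical-deg2}, dismissal of the $x^2$ case in characteristic two, then extraction of the count from Theorem~\ref{thm:Meyn} by taking degrees and applying M\"obius inversion with the indicator of the odd integers, and finally the observation that the square and non-square cases give the same answer because a power of two $n>1$ is even. The paper packages the last step more compactly, writing $\deg H_m=q^m-\varepsilon^m$ with $\varepsilon\in\{0,1,-1\}$ and noting that $\varepsilon^{n/d}=\varepsilon^n$ for odd $d$, but your explicit bookkeeping is equivalent and correct.

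There is, however, one false assertion in your second paragraph: the claim that ``$f_R$ is irreducible if and only if $f$ is.'' Only the forward implication holds: $f_R$ irreducible forces $f$ irreducible, by multiplicativity of the transformation together with the doubling of degrees. The converse fails; for instance over $\F_5$ with $\sigma=1$, the irreducible polynomial $f(x)=x^2-3$ gives $f_R(x)=x^4-x^2+1=(x^2+2x-1)(x^2-2x-1)$. Indeed, if your biconditional were true, the quantity being counted would simply be the number of all monic irreducible polynomials of degree $n$, namely Gauss's $\frac{1}{n}\sum_{d\mid n}\mu(d)q^{n/d}$, contradicting the very statement you are proving: the whole point of the theorem is that only some irreducible $f$ yield irreducible $f_R$. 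Fortunately your conclusion --- that the quantity sought equals $P_\sigma(2n)$ --- needs only the true direction: the bijection of Lemma~\ref{lemma:invariant_quadratic} carries the set of monic $f$ of degree $n$ with $f_R$ irreducible onto the set of monic irreducible members of $\mathcal{I}_\sigma$ of degree $2n$, and every $f$ in the former set is automatically irreducible by the forward implication. Replace ``if and only if'' by ``only if'' and phrase the deduction this way, and your argument is complete.
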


\begin{proof}
According to the discussion which precedes Theorem~\ref{prop:canonical-deg2},
the count of irreducible polynomials of the form described does not change upon pre- and post-composing $g(x)/h(x)$ with
affine maps or the inversion map.
Theorem~\ref{prop:canonical-deg2} then describes the resulting convenient forms to which $g(x)/h(x)$ can be brought.
In particular, the proof of Theorem~\ref{prop:canonical-deg2} shows that $g(x)/h(x)$ can be brought to the form $x^2$
exactly when $q$ is even and $g'=h'=0$.
This case does not contribute any irreducible polynomials of the desired form,
as $f(x^2)$ cannot be irreducible.
In all other cases $g(x)/h(x)$ can be brought to the form
$x+\sigma x^{-1}$ for some $\sigma\in K^{\ast}$.

Let $\SRIM_{\sigma}(2n,q)$ be the number of irreducible monic
polynomials of degree $2n$ in $\mathcal{I}_\sigma$.
Taking degrees in Theorem~\ref{thm:Meyn} we find
\[
q^n-\varepsilon^n=
\sum_{d\mid n,\ \text{$d$ odd}}2n/d\cdot\SRIM_{\sigma}(2n/d,q),
\]
where $\varepsilon=0$ for $q$ even, and
$\varepsilon=\pm 1\in\Z$ according as
$\sigma^{(q-1)/2}=\pm 1\in\F_q$ for $q$ odd.
M\"obius inversion as described above turns this equation into
\[
2n\cdot\SRIM_{\sigma}(2n,q)=\sum_{d\mid n,\ \text{$d$ odd}}\mu(d)(q^{n/d}-\varepsilon^{n/d}).
\]
Because the sum
$\sum_{d\mid n,\ \text{$d$ odd}}\mu(d)\varepsilon^{n/d}
=\varepsilon^{n}\sum_{d\mid n,\ \text{$d$ odd}}\mu(d)$
vanishes unless $q$ is odd and $n$ is a power of $2$, and in this case equals $\varepsilon^n$, we conclude
\begin{equation}\label{eq:SRIM_sigma}
\SRIM_{\sigma}(2n,q)=
\begin{cases}
\frac{1}{2n}(q^n-\varepsilon^n)
 &\text{if $q$ is odd and $n$ is a power of $2$,}
\\
\frac{1}{2n}\sum_{\substack{d\mid n\\\text{$d$ odd}}}
\mu(d)q^{n/d}
 &\text{otherwise.}
\end{cases}
\end{equation}
Because of our assumption $n>1$ we have $\varepsilon^n=1$ in Equation~\eqref{eq:SRIM_sigma},
and our proof is complete.
\end{proof}

The hypothesis $n>1$ in our Theorem~\ref{thm:Ahmadi}, as well as in~\cite{Ahmadi:Carlitz},
which was not required in Carlitz's Equation~\eqref{eq:Carlitz},
was needed to ensure that
$f_R(x)=\bigl(h(x)\bigr)^{\deg f}\cdot f\bigl(g(x)/h(x)\bigr)$
has degree equal to $2\deg f$.
In the excluded case $f(x)=x-\alpha$, for some $\alpha\in\F_q$, that conclusion fails exactly when
$g_2=\alpha h_2$,
where $g(x)=g_2x^2+g_1x+g_0$
and $h(x)=h_2x^2+h_1x+h_0$.
For completeness we now count the irreducible quadratic polynomials
which arise from polynomials $x-\alpha$ through a given quadratic transformation,
that is, those of the form $g(x)-\alpha h(x)$ for some $\alpha\in\F_q$.
To obtain a simpler statement we exclude the case of even characteristic
where both $g(x)$ and $h(x)$ are polynomials in $x^2$,
whence no irreducible polynomial can arise anyway.

\begin{theorem}\label{thm:n=1}
Let $g,h\in\F_q[x]$ be coprime polynomials with
$\max(\deg g,\deg h)=2$, and if $q$ is even assume that $g'$ and $h'$ are not both zero.
Then the number of monic irreducible quadratic polynomials which are $\F_q$-linear combinations of
$g(x)$ and $h(x)$ equals $q/2$ if $q$ is even,
and it equals
$(q-1)/2$ or $(q+1)/2$ if $q$ is odd,
according to whether the polynomial $g'h-gh'$ has its roots in $\F_q$, or not.
\end{theorem}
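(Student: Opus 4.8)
The plan is to reduce, exactly as in the proof of Theorem~\ref{thm:Ahmadi}, to the canonical form $g(x)/h(x)=x+\sigma x^{-1}$ furnished by Theorem~\ref{prop:canonical-deg2}, and then to count directly. Since the set $\langle g,h\rangle$ of $\F_q$-linear combinations is an $\F_q$-vector space, the monic irreducible quadratics it contains are in bijection with the one-dimensional subspaces spanned by an irreducible quadratic, so it suffices to track those subspaces. I first observe that their number is unaffected by the pre- and post-compositions used in the reduction, even more transparently than in the situation of Lemma~\ref{lemma:unaffected}. A post-composition $g/h\mapsto a\,g/h+b$ or $g/h\mapsto h/g$ replaces the pair $(g,h)$ by $(ag+bh,h)$ or $(h,g)$, hence leaves the space $\langle g,h\rangle$ itself unchanged; a pre-composition by $x\mapsto ax+b$ or by $x\mapsto 1/x$ carries $\langle g,h\rangle$ onto $\langle g(ax+b),h(ax+b)\rangle$ or $\langle x^{2}g(1/x),x^{2}h(1/x)\rangle$ through the linear isomorphism $P(x)\mapsto P(ax+b)$ or $P(x)\mapsto x^{2}P(1/x)$, and these isomorphisms take subspaces spanned by an irreducible quadratic to subspaces of the same kind. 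When $q$ is even, the hypothesis that $g'$ and $h'$ are not both zero guarantees, by the derivative criterion recorded after Theorem~\ref{prop:canonical-deg2}, that we reach $x+\sigma x^{-1}$ and not $x^{2}$.

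With $g(x)/h(x)=x+\sigma x^{-1}$ I may take representatives $g(x)=x^{2}+\sigma$ and $h(x)=x$, so that the monic quadratics in $\langle g,h\rangle$ are precisely the $q$ polynomials $x^{2}+cx+\sigma$ with $c\in\F_q$, all sharing the constant term $\sigma\in\F_q^{\ast}$. A monic irreducible quadratic with constant term $\sigma$ corresponds to a Frobenius-conjugate pair $\{\theta,\theta^{q}\}$ of roots in $\F_{q^{2}}\setminus\F_q$ with $\theta\cdot\theta^{q}=\sigma$, that is, with $N(\theta)=\sigma$ for the norm map $N\colon\F_{q^{2}}^{\ast}\to\F_q^{\ast}$. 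Since $N$ is surjective with all fibres of size $q+1$, and the elements of $\F_q$ of norm $\sigma$ are exactly the square roots of $\sigma$ in $\F_q$, the number of such conjugate pairs, hence the sought count, equals $(q+1-s)/2$, where $s$ is the number of square roots of $\sigma$ in $\F_q$.

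It then remains to evaluate $s$. In characteristic two squaring is a bijection of $\F_q$, so $s=1$ and the count is $q/2$, as claimed. In odd characteristic $s=2$ or $s=0$ according as $\sigma$ is or is not a square in $\F_q^{\ast}$, giving $(q-1)/2$ or $(q+1)/2$. To match this with the stated criterion I would invoke the discussion following Theorem~\ref{prop:canonical-deg2}: after replacing $g/h$ by its reciprocal if necessary so that $g'h-gh'$ has degree exactly two, one may take $\sigma$ to be the discriminant of $g'h-gh'$, up to a square factor. A quadratic over $\F_q$ with nonzero discriminant has both roots in $\F_q$ exactly when that discriminant is a square; hence $\sigma$ is a square precisely when $g'h-gh'$ has its roots in $\F_q$, yielding $(q-1)/2$ in the split case and $(q+1)/2$ otherwise.

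The main obstacle I anticipate is not the norm count, which is routine, but cleanly certifying this last bridge between the square class of $\sigma$ and the splitting of $g'h-gh'$ uniformly across the degenerate cases. In particular I would need to confirm that when $g'h-gh'$ drops to degree one, so that one ramification value lies at infinity, the convention that its single finite root lies in $\F_q$ is consistent with $\sigma$ being forced into the square class by the reciprocal replacement, and that the scalar ambiguities in $\sigma$ accumulated through the various compositions never cross between the square and non-square classes --- this being exactly the invariance of the discriminant up to squares recorded after Theorem~\ref{prop:canonical-deg2}. A self-contained alternative, bypassing the reduction, would be to parametrise the quadratic members of $\langle g,h\rangle$ by $t$ via $g-th$, note that such a member is irreducible precisely when $\Delta(t)=(g_1-th_1)^{2}-4(g_2-th_2)(g_0-th_0)$ is a non-square, and evaluate $\sum_{t}\chi\bigl(\Delta(t)\bigr)$ for the quadratic character $\chi$ by the standard formula for character sums of a quadratic; the outcome again splits according to the discriminant of $\Delta$, which is tied to $g'h-gh'$.
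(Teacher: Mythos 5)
Your proposal is correct, and it follows the same high-level strategy the paper intends for this omitted proof (reduce $g/h$ to the canonical form $x+\sigma x^{-1}$ via Theorem~\ref{prop:canonical-deg2}, then count in that case), but both of its key technical steps genuinely differ from what the paper sketches, and in each case your choice is arguably cleaner. The paper's remark after the statement says the reduction must be \emph{adapted to avoid} post-composition with the inversion map, because the correspondence $f\mapsto f_R$ of Lemma~\ref{lemma:unaffected} breaks down for linear $f$ (the degree may drop). You sidestep this entirely: by recasting the count as the number of one-dimensional subspaces of the two-dimensional space $\langle g,h\rangle$ spanned by an irreducible quadratic, post-composition becomes trivially harmless (it replaces the spanning pair by another basis of the \emph{same} space; inversion just swaps $g$ and $h$), while the pre-compositions act by the linear bijections $P(x)\mapsto P(ax+b)$ and $P(x)\mapsto x^2P(1/x)$, which preserve irreducible quadratics in both directions. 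So no adaptation of the reduction is needed at all. Your endgame also differs: instead of invoking the case $n=1$ of Equation~\eqref{eq:SRIM_sigma} (Theorem~\ref{thm:Meyn} plus M\"obius inversion, which does apply when $n=1$), you count monic irreducible quadratics with constant term $\sigma$ directly through the norm map, getting $(q+1-s)/2$ with $s$ the number of square roots of $\sigma$ in $\F_q$; this is elementary and self-contained.

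The one point you flag as a potential obstacle does resolve, and more uniformly than via the reciprocal-replacement recipe you quote, which indeed has an edge case: if $g,h\in\F_q[x^2]$ (equivalently $a=c=0$ below), replacing $g/h$ by its reciprocal still leaves $g'h-gh'$ of degree one. The clean route is to work with the quantity $b^2-ac$, where $g'h-gh'=ax^2-2bx+c$ with $a=g_2h_1-g_1h_2$, $b=g_0h_2-g_2h_0$, $c=g_1h_0-g_0h_1$ as in Theorem~\ref{thm:Meyn_generalized}, so that $b^2-ac\neq 0$ by coprimality. A direct check shows each of the four moves multiplies $b^2-ac$ by a nonzero square (post-composition scales or negates the triple $(a,b,c)$; pre-composition by $x\mapsto\lambda x+\mu$ multiplies $b^2-ac$ by $\lambda^4$; pre-composition by inversion sends $(a,b,c)$ to $(-c,-b,-a)$ and fixes $b^2-ac$), and for the canonical pair $(x^2+\sigma,\,x)$ one gets $b^2-ac=\sigma$; hence $\sigma$ is a square exactly when $b^2-ac$ is. On the other hand, for $q$ odd, ``$g'h-gh'$ has its roots in $\F_q$'' is equivalent to ``$b^2-ac$ is a square'' in every case: if $a\neq 0$ the discriminant is $4(b^2-ac)\neq 0$, while if $a=0$ then $b\neq 0$, the polynomial is linear with its root automatically in $\F_q$, and $b^2-ac=b^2$ is indeed a square. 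This closes the degenerate cases, including $a=c=0$, with no case analysis on the reduction path. (Minor aside: in your alternative character-sum route, the parametrization $g-th$ of the quadratic members of $\langle g,h\rangle$ misses $h$ itself when $\deg h=2$, so it would need the projective parametrization $\lambda g+\mu h$ instead.)
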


We omit the proof, which is similar to that of the general case $n>1$ in Theorem~\ref{thm:Ahmadi},
except that the reduction of $g(x)/h(x)$ to the special form $x+\sigma/x$ done in the proof of Theorem~\ref{prop:canonical-deg2}
needs to be adapted in order to avoid applying post-composition with the inversion map, where $\deg f_R=2\deg f$ may fail.

The following immediate corollary of Theorems~\ref{thm:Ahmadi} and~\ref{thm:n=1} states the special case of our count of irreducible polynomials
where they are closest to the traditional definition of self-reciprocal polynomials,
namely invariant under the involutive transformation considered in
Lemma~\ref{lemma:invariant_quadratic}.

\begin{cor}
Let $\sigma\in\F_q^{\ast}$.
The number of monic irreducible polynomials $g\in \F_q[x]$ of degree $2n$
which satisfy
$x^{2n}\cdot g(\sigma/x)=\sigma^ng(x)$ equals
\[
\frac{1}{2n}\biggl(-\delta+\sum_{\substack{d\mid n\\\text{$d$ odd}}}\mu(d)q^{n/d}\biggr),
\]
where
\[
\delta=
\begin{cases}
1 &\text{if $q$ is odd and $n>1$ is a power of $2$,}
\\
1 &\text{if $q$ is odd, $n=1$, and $\sigma$ is a square in $\F_q$,}
\\
-1 &\text{if $q$ is odd, $n=1$, and $\sigma$ is not a square in $\F_q$,}
\\
0 &\text{otherwise.}
\end{cases}
\]
\end{cor}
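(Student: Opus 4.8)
The plan is to identify the polynomials being counted with the degree-$2n$ members of the set $\mathcal{I}_\sigma$ of Theorem~\ref{thm:Meyn}, and then to read off their number from Theorems~\ref{thm:Ahmadi} and~\ref{thm:n=1} specialized to the quadratic expression $R(x)=x+\sigma/x=(x^2+\sigma)/x$, that is, to $g(x)=x^2+\sigma$ and $h(x)=x$. Once this identification is in place the argument is pure case-matching, and the single correction term $\delta$ is precisely what packages the quantity $\varepsilon^n$ occurring in Equation~\eqref{eq:SRIM_sigma}: it equals $1$ for every even $n$ but keeps track of the quadratic character of $\sigma$ when $n=1$.

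First I would invoke Lemma~\ref{lemma:invariant_quadratic}: a monic $F$ of degree $2n$ satisfies $x^{2n}\cdot F(\sigma/x)=\sigma^nF(x)$ exactly when $F=f_R$ for a unique $f$ of degree $n$. For $n>1$ the map $f\mapsto f_R$ is degree-doubling (since $h(x)=x$ admits no drop in degree) and multiplicative, so an irreducible such $F$ forces the corresponding $f$ to be irreducible; hence counting these $F$ is the same as counting the $f$ of Theorem~\ref{thm:Ahmadi} for this $R$. Since $g'=2x$ and $h'=1$ are never both zero, the vanishing case of that theorem does not arise, and its two remaining cases give $\frac{1}{2n}(q^n-1)$ when $q$ is odd and $n$ is a power of $2$, and $\frac{1}{2n}\sum_{d\mid n,\,d\text{ odd}}\mu(d)q^{n/d}$ otherwise. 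As the odd divisors of a power of $2$ collapse the sum to its leading term $q^n$, both expressions match the stated formula with $\delta=1$ and $\delta=0$ respectively.

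For $n=1$ I would instead apply Theorem~\ref{thm:n=1} to the same $g,h$, where $g'h-gh'=2x\cdot x-(x^2+\sigma)=x^2-\sigma$; its roots are the square roots of $\sigma$, so it splits over $\F_q$ precisely when $\sigma$ is a square. Theorem~\ref{thm:n=1} then returns $q/2$ for $q$ even and $(q-1)/2$ or $(q+1)/2$ for $q$ odd according as $\sigma$ is or is not a square, while the stated formula at $n=1$ reads $\tfrac12(q-\delta)$ with $\delta=0,1,-1$ in these three cases; the two agree. I expect the only genuine obstacle to be this final sign bookkeeping at $n=1$, namely confirming that the non-square case (count $(q+1)/2$) is exactly the one recorded by $\delta=-1$, which follows at once from the failure of $x^2-\sigma$ to split. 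Assembling the $n=1$ and $n>1$ verifications completes the proof.
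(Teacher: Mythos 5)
Your proposal is correct and takes essentially the same route as the paper, which states this result as an immediate corollary of Theorems~\ref{thm:Ahmadi} and~\ref{thm:n=1}: you specialize to $R(x)=x+\sigma/x$ (i.e.\ $g(x)=x^2+\sigma$, $h(x)=x$), use Lemma~\ref{lemma:invariant_quadratic} together with multiplicativity of $f\mapsto f_R$ to identify the invariant irreducible polynomials of degree $2n$ with the transformed irreducibles counted there, and then match cases. Your bookkeeping, including the $n=1$ sign analysis via $g'h-gh'=x^2-\sigma$, is exactly the verification the paper leaves implicit.
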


\section{Explicit characterization of the polynomials obtained through a quadratic transformation}\label{sec:explicit}

Meyn's proof in~\cite{Meyn} of Carlitz's counting formula for {\em srim} polynomials relies
on viewing them as irreducible factors of polynomials of the form $x^{q^n+1}-1$,
as in the special case $\sigma=1$ of our Theorem~\ref{thm:Meyn}, which is~\cite[Theorem~1]{Meyn}.
It is actually possible to obtain a similar characterization for an arbitrary quadratic transformation, as follows.

\begin{theorem}\label{thm:Meyn_generalized}
Let $g(x)=g_2x^2+g_1x+g_0$
and $h(x)=h_2x^2+h_1x+h_0$
be coprime polynomials over the field $\F_q$ of $q$ elements, with
$\max(\deg g,\deg h)=2$.
For any nonzero polynomial $f\in\F_q[x]$, further satisfying $f(g_2/h_2)\neq 0$ in case $h_2\neq 0$, we set
$f_R(x)=h(x)^{\deg f}\cdot f\bigl(g(x)/h(x)\bigr)$.
If $q$ is even, assume in addition that $g_1$ and $h_1$ are not both zero.

Then every irreducible polynomial of the form $f_R(x)$ for some $f(x)$, and of degree $2n/d$ with $d$ odd, is a factor of the polynomial
\[
H(x)=H_{R,q^n}(x)=ax^{q^n+1}-b(x^{q^n}+x)+c,
\]
where
\[
a=g_2h_1-g_1h_2,
\quad
b=g_0h_2-g_2h_0,
\quad
c=g_1h_0-g_0h_1.
\]

Furthermore, every irreducible factor of $H(x)$ of degree higher than one,
and different from $ax^2-2bx+c$ in case that is irreducible,
has (up to a scalar factor) the form $f_R(x)$ for some $f\in\F_q[x]$,
and its degree divides $2n$ but not $n$.
\end{theorem}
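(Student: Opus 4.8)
The plan is to imitate the proof of Theorem~\ref{thm:Meyn}, of which this is the special case $g(x)/h(x)=x+\sigma x^{-1}$, the one new ingredient being the \emph{companion involution} of the degree-two map $R(x)=g(x)/h(x)$. First I would introduce the M\"obius transformation $\mu(\xi)=(b\xi-c)/(a\xi-b)$, represented by the matrix $\left[\begin{smallmatrix}b&-c\\a&-b\end{smallmatrix}\right]$, which has trace $0$ and determinant $ac-b^2$; by Cayley--Hamilton its square is $(b^2-ac)$ times the identity, so $\mu$ is an involution. A short computation identifies $\mu$ as the deck transformation of $R$, that is $R\circ\mu=R$, so that $\mu$ interchanges the two points of a generic fibre of $R$. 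The cleanest justification is that the fixed points of $\mu$ are exactly the roots of $ax^2-2bx+c$, that this quadratic equals $g'h-gh'$, and that the roots of the latter are the ramification points of $R$; since a M\"obius involution is pinned down by its fixed locus, $\mu$ must be the deck transformation. This is the exact generalisation of the relation $R(\sigma/x)=R(x)$ underlying Theorem~\ref{thm:Meyn}, where $\mu(\xi)=\sigma/\xi$.

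Second, I would translate $H(x)=0$ into a condition on roots, working in $\Proj^1(\barFq)$ so that the point at infinity is on the same footing (this matters when $a=0$, where $\deg H<q^n+1$ and $\infty$ becomes a fixed point of $\mu$). Clearing denominators in $\xi^{q^n}=\mu(\xi)$ gives precisely $a\xi^{q^n+1}-b(\xi^{q^n}+\xi)+c=0$, so the roots of $H$ are exactly the $\xi$ with $\xi^{q^n}=\mu(\xi)$. That these roots are simple I would read off from $H'(x)=ax^{q^n}-b$ (every other term carries a factor $q^n\equiv 0$): a common zero of $H$ and $H'$ forces $ac=b^2$, which is excluded because $b^2-ac$ is, up to squares, the invariant $\sigma$ of the discussion following Theorem~\ref{prop:canonical-deg2} and is nonzero once $R$ is not of the excluded form $x^2$. (Lemma~\ref{lemma:checking} supplies the root-theoretic bookkeeping in the generality needed.)

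Third, I would run the two inclusions as in Theorem~\ref{thm:Meyn}, now with $R$ and $\mu$ in the roles of $x+\sigma x^{-1}$ and $\sigma/x$. For the first assertion, given an irreducible $f_R$ of degree $2n/d$ with $d$ odd, I pick a root $\xi$ and set $\beta=R(\xi)$, a root of $f$ of degree $m=n/d$; since $\beta\in\F_{q^m}$ one has $\xi^{q^m}\in R^{-1}(\beta)=\{\xi,\mu(\xi)\}$, and $\xi\notin\F_{q^m}$ forces $\xi^{q^m}=\mu(\xi)$, whence applying the involution an odd number $d$ of times yields $\xi^{q^n}=\mu(\xi)$ and $f_R\mid H$. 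For the converse, an irreducible factor $F$ of $H$ with $\deg F>1$ has a root $\xi$ with $\xi^{q^n}=\mu(\xi)$; if $\mu(\xi)=\xi$ then $\xi$ is a root of $ax^2-2bx+c$ and $F$ is the excluded factor, so otherwise $\beta=R(\xi)$ satisfies $\beta^{q^n}=R(\mu(\xi))=\beta$, lies in $\F_{q^n}$, and has degree $m\mid n$. Taking $f$ to be its minimal polynomial, the same fibre analysis gives $\xi^{q^m}=\mu(\xi)$, so $\xi$ has degree $2m$ over $\F_q$ with $n/m$ odd; thus $F$ and $f_R$ are irreducible of the common degree $2m=2n/d$ and share the root $\xi$, hence agree up to a scalar, and $2n/d$ divides $2n$ but not $n$.

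The main difficulty I anticipate is not any single inclusion but the degenerate situations that the projective viewpoint must absorb cleanly. Chief among them is the identity $R\circ\mu=R$: while the fixed-point argument makes it conceptually transparent, in characteristic two $\mu$ is parabolic (its two fixed points coalesce, reflecting that $g'h-gh'=ax^2+c$ is then a square), and one must check that the excluded form $x^2$ — equivalently $a=0$ together with $b^2=ac$ — is precisely where the argument breaks, matching the hypothesis that $g_1,h_1$ are not both zero. A second delicate point is the behaviour at infinity: when $a=0$ the degree of $H$ drops and $\infty$ becomes a fixed point of $\mu$, so I would first normalise to the case $a\neq 0$ using Theorem~\ref{prop:canonical-deg2} and Lemma~\ref{lemma:unaffected}, verifying that these operations replace $H$ only by a scalar multiple and a M\"obius substitute and hence preserve the statement. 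Finally, in the converse one must rule out $f(g_2/h_2)=0$ in order to know $\deg f_R=2m=\deg F$; this I would obtain by showing that $\mu(\infty)=b/a$ is never a root of $H$ (its would-be image under $\mu$ is $\infty$, which cannot equal $\xi^{q^n}$ for finite $\xi$), so no conjugate of a genuine root can map to the critical value $R(\infty)=g_2/h_2$.
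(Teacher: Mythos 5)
Your architecture is genuinely different from the paper's, and much of it works. Where the paper proves the forward inclusion via the polynomial identity $g(x)^{q^n}h(x)-g(x)h(x)^{q^n}=(x^{q^n}-x)H(x)$ and the converse by verifying Equation~\eqref{eq:invariant} through Lemma~\ref{lemma:checking} and then invoking Lemma~\ref{lemma:invariant_generalized}, you run both directions through the deck transformation $\mu$ and the fibres of $R$, reconstructing $f$ as the minimal polynomial of $\beta=R(\xi)$. Your forward inclusion is correct as it stands; incidentally, the identity $g(y)h(x)-g(x)h(y)=(y-x)\bigl(axy-b(x+y)+c\bigr)$ is the cleanest justification of your key claim $R\circ\mu=R$, valid in every characteristic, whereas your fixed-point argument degenerates in characteristic two, as you yourself note.

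The converse, however, has a genuine gap: you tacitly assume that the root $\xi$ of $F$ is not a pole of $R$, i.e.\ that $h(\xi)\neq 0$, so that $\beta=R(\xi)$ is a finite element of $\F_{q^n}$ possessing a minimal polynomial. That can fail, and with it the step ``$\beta$ lies in $\F_{q^n}$ and has degree $m\mid n$''. Take $q=5$, $g(x)=x$, $h(x)=x^2-2$, $n=1$: then $(a,b,c)=(-1,0,-2)$ and $H(x)=-(x^6+2)$, and over $\F_5$ one has $x^6+2\equiv 2^3+2\equiv 0\pmod{x^2-2}$, so $F=h$ is an irreducible factor of $H$ of degree $2>1$, not proportional to $ax^2-2bx+c=-(x^2+2)$. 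For this factor your construction collapses ($R(\pm\sqrt{2})=\infty$), and no repair can produce the required $f$: writing $f=\sum_k a_kx^k$ of degree $m$ gives $f_R\equiv a_mg^m\pmod{h}$, which is nonzero since $g$ and $h$ are coprime, so no $f_R$ is even divisible by $h$. In other words, the ``furthermore'' clause needs $h$ itself (when irreducible) listed as a further exceptional factor; this situation occurs exactly when $h$ is irreducible, not proportional to $ax^2-2bx+c$, and $n$ is odd, for then $\mu$ swaps the two roots of $h$ and they satisfy $\xi^{q^n}=\xi^{q}=\mu(\xi)$. Note the asymmetry with the degenerate case you did treat: $\mu(\infty)=b/a$ is indeed never a root of $H$, but the points of the fibre $R^{-1}(\infty)$, the roots of $h$, can be. (You are in good company: the same case slips through the paper's own proof, since Lemma~\ref{lemma:invariant_generalized} only returns $F=h^{n'}\cdot f\bigl(g(x)/h(x)\bigr)$ with $2n'=\deg F$ and $f$ of degree possibly smaller than $n'$; for $F$ proportional to $h$ it returns a constant $f$, which is not of the form $f_R$.) Two smaller corrections: the excluded characteristic-two form $x^2$ corresponds to $a=c=0$, not to ``$a=0$ together with $b^2=ac$'' --- under the hypotheses $b^2-ac\neq 0$ always holds, being up to sign the resultant of the homogenized $g$ and $h$; and your preliminary normalization to $a\neq 0$ needs justification beyond Lemma~\ref{lemma:unaffected}, which counts irreducible polynomials of the form $f_R$ but says nothing about transporting irreducible factors of $H$ across the reduction.
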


Some comments are in order on the statement of Theorem~\ref{thm:Meyn_generalized}.
The coprimality condition imposed on $g(x)$ and $h(x)$ in Theorem~\ref{thm:Meyn_generalized},
and the assumption $\max(\deg g,\deg h)=2$, are together equivalent to $b^2-ac\neq 0$.
This can be seen by computing the resultant of $g(x)$ and $h(x)$, or rather their quadratic homogenized versions.
In turn, for a polynomial $H(x)$ of the form given in Theorem~\ref{thm:Meyn_generalized},
the condition $b^2-ac\neq 0$ is equivalent to $H(x)$ having only simple roots in a splitting field, as
$(ax-b)H'(x)-aH(x)=b^2-ac$.
(Strictly speaking, this is true unless $a=b=0\neq c$, whence $H(x)$ is a nonzero constant,
but that case cannot occur under the hypotheses of Theorem~\ref{thm:Meyn_generalized}.)
In conclusion, the polynomial $H(x)$ of Theorem~\ref{thm:Meyn_generalized} has distinct roots in a splitting field,
and hence its irreducible factors over $\F_q$ are all distinct.

In the excluded case in Theorem~\ref{thm:Meyn_generalized} where $q$ is even and $g_1=h_1=0$,
the polynomial $f_R(x)$ belongs to $\F_q[x^2]$, hence is a square in $\F_q[x]$, and cannot be irreducible.

Our proof of Theorem~\ref{thm:Meyn_generalized} involves applying the quadratic transformation to reducible polynomials as well.
A problem arises, which we discussed near the beginning of Section~\ref{sec:quadratic},
and also affects the omitted proof of Theorem~\ref{thm:n=1},
of the degree of the transformed polynomial $f_R$ possibly being less than twice the degree of $f$.
As discussed there, this drop in degree occurs precisely when $(g/h)(\infty)$ is a root of $f$.
Hence we have avoided defining $f_R$ for such $f$ in Theorem~\ref{thm:Meyn_generalized}
by assuming that $f(g_2/h_2)\neq 0$ in case $h_2\neq 0$.

The proof of Theorem~\ref{thm:Meyn_generalized} requires a generalization of Lemma~\ref{lemma:invariant_quadratic},
which holds over an arbitrary field $K$,
where the involutory substitution $x\mapsto\sigma/x$ is replaced with an arbitrary involution in the M\"obius group over $K$.
Any such involution has the form
$x\mapsto (bx-c)/(ax-b)$,
for some $a,b,c\in K$ with $b^2-ac\neq 0$, and $a,c$ not both zero in case $q$ is even.
Note that its fixed points in $K$, if any, are the roots of $ax^2-2bx+c$.
Our Lemma~\ref{lemma:invariant_generalized} below roughly says that
the polynomials $F \in K[x]$ of even degree which are `invariant', in an appropriate sense, under the involution $x\mapsto (bx-c)/(ax-b)$,
are exactly those
which are obtained through a certain quadratic transformation, associated to $R(x)=g(x)/h(x)$ in the usual way.
There is some freedom as to the choice of $R(x)$ in the formulation, all choices being related by post-composition with
M\"obius transformations.

\begin{lemma}\label{lemma:invariant_generalized}
Let $K$ be any field, and let $a,b,c\in K$ with $b^2-ac\neq 0$.
Let $(g_0,g_1,g_2)$ and $(h_0,h_1,h_2)$ be $K$-linearly independent triples of elements of $K$ such that
$ag_0+bg_1+cg_2=0$ and $ah_0+bh_1+ch_2=0$.
If $K$ has characteristic two, assume in addition that $a$ and $c$ are not both zero.
Consider the two polynomials $g(x)=g_2x^2+g_1x+g_0$
and $h(x)=h_2x^2+h_1x+h_0$ in $K[x]$.

Then a polynomial $F \in K[x]$ of degree $2n$ satisfies
\begin{equation}\label{eq:invariant}
(ax-b)^{2n}\cdot
F\left(\frac{bx-c}{ax-b}\right)
=(b^2-ac)^n\cdot F(x)
\end{equation}
if, and only if, $F(x)=h(x)^n\cdot f\bigl(g(x)/h(x)\bigr)$ for some polynomial $f\in K[x]$.
\end{lemma}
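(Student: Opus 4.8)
The plan is to mirror the proof of Lemma~\ref{lemma:invariant_quadratic}: prove the ``if'' direction by direct computation, and the converse by a dimension count. The computational core is to see how $g$ and $h$ transform under the involution $\tau(x)=(bx-c)/(ax-b)$ underlying~\eqref{eq:invariant}. First I would compute
\[
g(\tau(x))=\frac{g_2(bx-c)^2+g_1(bx-c)(ax-b)+g_0(ax-b)^2}{(ax-b)^2},
\]
and collect coefficients of the numerator: the constraint $ag_0+bg_1+cg_2=0$ should make it collapse to $(b^2-ac)\,g(x)$, and likewise $ah_0+bh_1+ch_2=0$ gives the numerator $(b^2-ac)\,h(x)$ for $h(\tau(x))$. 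Thus $g(\tau(x))=(b^2-ac)\,g(x)/(ax-b)^2$ and the same for $h$, so in particular $g/h$ is $\tau$-invariant. Consequently, for $F(x)=h(x)^n\,f\bigl(g(x)/h(x)\bigr)$ one computes $F(\tau(x))=(b^2-ac)^n(ax-b)^{-2n}F(x)$, which is exactly~\eqref{eq:invariant}; note this step never uses $\deg F=2n$.

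For the converse I would set up the linear-algebra argument of Lemma~\ref{lemma:invariant_quadratic}. Let $V$ be the space of $F\in K[x]$ of degree at most $2n$ satisfying~\eqref{eq:invariant}, now read as a formal identity with the integer $2n$ decoupled from $\deg F$. The map $\Phi\colon f\mapsto h^n f(g/h)$ is $K$-linear from the $(n+1)$-dimensional space of polynomials $f$ with $\deg f\le n$ into $V$, landing in $V$ by the computation above. It is injective because $g/h$ is a nonconstant rational function, the two triples being linearly independent, so that $f\mapsto f(g/h)$ is injective and multiplication by $h^n\ne 0$ preserves this. Hence it suffices to prove $\dim_K V=n+1$: then $\Phi$ is an isomorphism, and since $\deg\Phi(f)=2n$ forces $\deg f=n$, the equivalence follows.

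Computing $\dim_K V$ is the crux and the step I expect to be hardest. Let $S$ be the operator $F(x)\mapsto(ax-b)^{2n}F(\tau(x))$ on polynomials of degree at most $2n$; it is well defined, and using $a\tau(x)-b=(b^2-ac)/(ax-b)$ one finds $S^2=(b^2-ac)^{2n}$, so $\tilde S:=(b^2-ac)^{-n}S$ is an involution and $V=\ker(\tilde S-\mathrm{id})$ is its $(+1)$-eigenspace. To pin down its dimension I would reduce to the case $\tau_0(x)=\sigma/x$ already handled in Lemma~\ref{lemma:invariant_quadratic}. The matrix $T=\left[\begin{smallmatrix}b&-c\\a&-b\end{smallmatrix}\right]$ has trace $0$ and determinant $-(b^2-ac)$, hence the same characteristic polynomial $t^2-(b^2-ac)$ as $\left[\begin{smallmatrix}0&\sigma\\1&0\end{smallmatrix}\right]$ once $\sigma:=b^2-ac$; since neither matrix is scalar---in characteristic two the hypothesis that $a,c$ are not both zero is exactly what guarantees $T$ non-scalar, i.e.\ $\tau\ne\mathrm{id}$---rational canonical form shows they are conjugate over $K$, so $\tau$ and $\tau_0$ are conjugate in $\PGL(2,K)$. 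Conjugating by the corresponding M\"obius map identifies $\tilde S$ with the analogous normalized involution for $\tau_0$ through the weight-$2n$ slash action of $\PGL(2,K)$, whence the two $(+1)$-eigenspaces have the same dimension, namely $n+1$ by Lemma~\ref{lemma:invariant_quadratic}.

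The delicate points are the characteristic-two subtleties: there $T$ has a repeated eigenvalue and is a single Jordan block, so $\tilde S$ is not semisimple and one cannot read off $\dim V$ from a trace; the reduction to Lemma~\ref{lemma:invariant_quadratic}, whose own coefficientwise proof is characteristic-free, is what rescues the argument. In characteristic not two one could instead finish by a trace computation: $S$ is then the $2n$-th symmetric power of $T$, with eigenvalues $\pm\sqrt{b^2-ac}$, its trace is $(b^2-ac)^n$, so $\mathrm{tr}\,\tilde S=1$ and $\dim V=\tfrac12\bigl((2n+1)+1\bigr)=n+1$. I would also verify that matching the characteristic polynomials exactly (rather than only up to a scalar) makes the slash-action conjugacy send the normalized involution $\tilde S$ to its $\tau_0$-counterpart on the nose, so that no projective scalar disturbs the identification of eigenspaces.
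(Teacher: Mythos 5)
Your proof is correct, but it takes a genuinely different route from the paper's. The paper argues field-theoretically: for $a\neq 0$ it shows that $K\bigl(g(x)/h(x)\bigr)$ coincides with $K(z)$, where $z=(ax^2-c)/(ax-b)$, which is exactly the fixed field of the involution $\tau(x)=(bx-c)/(ax-b)$ acting on $K(x)$; an invariant $F(x)/h(x)^n$ therefore equals $f\bigl(g(x)/h(x)\bigr)$ for some rational expression $f$ of degree $n$, and a pole argument (using coprimality of $g$ and $h$) shows that $f$ is in fact a polynomial; the case $a=0$ is treated separately there. Your route instead mirrors the linear-algebra proof of Lemma~\ref{lemma:invariant_quadratic}: the invariance condition is the $+1$-eigenspace of the normalized slash operator $\tilde S$, and the dimension count $\dim V=n+1$ is transported from Lemma~\ref{lemma:invariant_quadratic} via the $K$-rational conjugacy of $T=\left[\begin{smallmatrix}b&-c\\a&-b\end{smallmatrix}\right]$ to $\left[\begin{smallmatrix}0&\sigma\\1&0\end{smallmatrix}\right]$ with $\sigma=b^2-ac$, which rational canonical form guarantees for non-scalar matrices with equal characteristic polynomials; your characteristic-two hypothesis check (that $a,c$ not both zero is what makes $T$ non-scalar) is exactly right, the intertwining $\Psi\circ S=S_0\circ\Psi$ does follow from the right-action property of the slash operator together with the exact (not merely projective) matrix equality, and the normalizing scalars on both sides agree since both equal $(b^2-ac)^n$. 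Your approach buys uniformity and economy: no case split on $a$, all characteristics handled at once, and reuse of the already-proven quadratic lemma in place of any appeal to fixed fields of $K(x)$. The paper's approach buys generality: the same fixed-field argument extends verbatim to M\"obius transformations of higher order, which is how Lemma~\ref{lemma:invariant_cubic} is proved, whereas your reduction has no previously known case to reduce to in that setting.

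One caveat: your parenthetical claim that $\deg\Phi(f)=2n$ forces $\deg f=n$ is false in general. For instance, if $\deg h=2$ then $\Phi(1)=h^n$ has degree $2n$ while $\deg f=0$ (concretely, $g=x^2$, $h=x^2+1$, $(a,b,c)=(0,1,0)$). This is harmless here, because, unlike Lemma~\ref{lemma:invariant_quadratic}, the present lemma asserts only that $f$ is \emph{some} polynomial, and the bijectivity of $\Phi$ delivers exactly that; but the remark should be deleted rather than repaired.
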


Note that, despite the shift of focus from $g(x)$ and $h(x)$ to the triple $(a,b,c)$ in Lemma~\ref{lemma:invariant_generalized},
that triple is necessarily proportional to the triple $(a,b,c)$ constructed from $g(x)$ and $h(x)$ in Theorem~\ref{thm:Meyn_generalized}.
In particular, the comment we made on the hypothesis $b^2-ac\neq 0$ after the statement of Theorem~\ref{thm:Meyn_generalized} still applies,
and hence the rational expression $g(x)/h(x)$ produced in Lemma~\ref{lemma:invariant_generalized} is indeed quadratic
(that is to say, $\max(\deg g,\deg h)=2$).

At one point in the following proof, as well as in Section~\ref{sec:variations}, we will need to consider rational expressions of arbitrary degree
(over a field $K$).
Recall that the degree of a nonzero rational expression $u(x)=g(x)/h(x)\in K(x)$, where $g(x),h(x)\in K[x]$ are coprime polynomials,
is defined as $\deg(u)=\max(\deg g,\deg h)$.
This terminology is justified by the fact that, assuming $u(x)\not\in K$,
the degree of $u(x)$ equals the degree of the field extension $K(x)$ over $K(u)$.
In fact, the minimal polynomial of $x$ over $K(u)$ is a scalar multiple of $g(y)-uh(y)$.
These facts are often assigned as exercises in standard algebra textbooks, but a proof is explicitly given
in~\cite[Chapter~5, Proposition~2.1]{Cohn:Algebra3}

\begin{proof}[Proof of Lemma~\ref{lemma:invariant_generalized}]
We consider separately the special case where $a=0$,
which is equivalent to some linear combination of $g(x)$ and $h(x)$ being a nonzero constant.
In this case Equation~\eqref{eq:invariant} becomes
$F(c/b-x)=F(x)$, which can be shown to be equivalent to $F(x)$ being a polynomial in $bx^2-cx$.
Because each of $g(x)$ and $h(x)$ equals a scalar multiple of $bx^2-cx$ plus a constant,
the latter is equivalent to $F(x)$ being a rational function of $g(x)/h(x)$.
Deducing that $F(x)$ has the form described in Lemma~\ref{lemma:invariant_generalized}
for some polynomial $f$ (rather than just a rational function $f$)
can be done in the same way as in the case $a\neq 0$, which will be explained in the final part of this proof.

Now we may assume $a\neq 0$.
The polynomial
\[
(y-x)\left(y-\frac{bx-c}{ax-b}\right)=y^2-\frac{ax^2-c}{ax-b}y+\frac{bx^2-cx}{ax-b}
=y^2-zy+\frac{bz-c}{a}
\]
has coefficients in the subfield
$L=K(z)$ of $K(x)$, where
$z=(ax^2-c)/(ax-b)$.
It is irreducible over $L$
because its two roots in $K(x)$ are interchanged by the automorphism of $K(x)$ given by pre-composition
(that is, substitution) with the involution
$x\mapsto(bx-c)/(ax-b)$.
The linear conditions imposed in the hypotheses on the coefficients of $g(x)$ and $h(x)$
show that each of those two polynomials is a linear combination of the numerator and the denominator of
$(ax^2-c)/(ax-b)$.
Hence $g(x)/h(x)$ can be obtained from $(ax^2-c)/(ax-b)$ by post-composing it with a suitable M\"obius transformation.
In other words, $g(x)/h(x)$ can be obtained from $z$ by an application of a suitable M\"obius transformation,
and hence $L=K\bigl(g(x)/h(x)\bigr)$.

If
$F(x)/h(x)^n=f(g(x)/h(x))$
for some $f\in K[x]$, then the left-hand side must be invariant under substitution with
$x\mapsto(bx-c)/(ax-b)$,
and a calculation shows that this condition is equivalent to Equation~\eqref{eq:invariant}.
Conversely, if $F(x)/h(x)^n$
is invariant under the substitution $x\mapsto(bx-c)/(ax-b)$, then because
$L=K\bigl(g(x)/h(x)\bigr)$ we have
$F(x)/h(x)^n=f\bigl(g(x)/h(x)\bigr)$
for some rational expression $f\in K(x)$, necessarily of degree $n$.
We only need to show that $f$ is actually a polynomial.
If it were not, then it would have a pole at some
$\eta\in\overline{K}$, the algebraic closure of $K$.
But then $f\bigl(g(x)/h(x)\bigr)$
would have a pole at each root $\xi\in\overline{K}$ of
the polynomial $g(x)-\eta h(x)$.
Now $F(x)/h(x)^n=f\bigl(g(x)/h(x)\bigr)$
cannot have any pole except at any root $\zeta$ of $h$, but clearly $g(\zeta)-\eta h(\zeta) \neq 0$, hence we get the desired contradiction
and we are bound to conclude that $f\in K[x]$.
\end{proof}

In a similar way as for the condition of being self-reciprocal, which it generalizes,
Equation~\eqref{eq:invariant} in Lemma~\ref{lemma:invariant_generalized} can be checked in terms of the roots of $F$ in a splitting field, as follows.

\begin{lemma}\label{lemma:checking}
Assume that $F(x)$ in Lemma~\ref{lemma:invariant_generalized} is coprime with $ax^2-2bx+c$.
Then Equation~\eqref{eq:invariant}
is equivalent to $(b\xi-c)/(a\xi-b)$ being a root of $F(x)$ in a splitting field along with each root $\xi$, and with the same multiplicity.
\end{lemma}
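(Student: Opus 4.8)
The plan is to verify the identity by fully factoring $F$ over a splitting field and tracking how the substitution $x\mapsto\tau(x):=(bx-c)/(ax-b)$ permutes the roots. Since both sides of Equation~\eqref{eq:invariant} are multiplied by the same constant when $F$ is, I may assume $F$ monic and write $F(x)=\prod_{i=1}^{2n}(x-\xi_i)$ over a splitting field, so that multiplicities are recorded by repetition in the multiset $\{\xi_1,\dots,\xi_{2n}\}$. The transformation $\tau$ is the involution appearing in Lemma~\ref{lemma:invariant_generalized}; its fixed points in $\overline{K}$ are exactly the roots of $ax^2-2bx+c$, so the coprimality hypothesis guarantees that no $\xi_i$ is fixed by $\tau$. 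When $a\neq 0$ the one delicate point is the finite pole $b/a$ of $\tau$, which is \emph{not} a fixed point (one checks $a(b/a)^2-2b(b/a)+c=(ac-b^2)/a\neq 0$) and so is not excluded as a root by coprimality alone; I will have to rule it out separately. When $a=0$ the map $\tau$ is affine and this complication is absent.

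Next I would compute the left-hand side directly. Clearing the denominator gives
\[
(ax-b)^{2n}\cdot F\bigl(\tau(x)\bigr)=\prod_{i=1}^{2n}\bigl[(b-a\xi_i)x-(c-b\xi_i)\bigr].
\]
Comparing with the right-hand side $(b^2-ac)^nF(x)$, whose degree is exactly $2n$ since $b^2-ac\neq 0$, shows that each factor $(b-a\xi_i)x-(c-b\xi_i)$ must be genuinely linear: if $a\neq 0$ and some $\xi_i=b/a$, that factor degenerates to the nonzero constant $(b^2-ac)/a$, dropping the degree of the left-hand side below $2n$ and making Equation~\eqref{eq:invariant} impossible. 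The root-multiplicity condition likewise forces $b/a$ not to be a root, because $\tau(b/a)=\infty$ is not a root of $F$; and when $a=0$ no such degeneracy arises, as $b-a\xi_i=b\neq 0$. Hence under either condition every $b-a\xi_i$ is nonzero, and I may rewrite $(b-a\xi_i)x-(c-b\xi_i)=(b-a\xi_i)\bigl(x-\tau(\xi_i)\bigr)$, using the identity $(c-b\xi_i)/(b-a\xi_i)=\tau(\xi_i)$.

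The last step is to read off the equivalence. The displayed product now equals $\bigl(\prod_i(b-a\xi_i)\bigr)\prod_i\bigl(x-\tau(\xi_i)\bigr)$, so by unique factorization Equation~\eqref{eq:invariant} holds if and only if the monic polynomial $\prod_i(x-\tau(\xi_i))$ equals $F(x)$ \emph{and} the leading coefficient $\prod_i(b-a\xi_i)$ equals $(b^2-ac)^n$. The first condition is precisely that the multiset $\{\xi_i\}$ is $\tau$-invariant, that is, the asserted root-and-multiplicity condition (here I use that $\tau$ is a bijective involution, so ``$\tau$-invariant multiset'' and ``each root maps to a root of equal multiplicity'' coincide). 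The point that needs genuine care, and which prevents the leading coefficient from appearing as a separate constraint, is the pairing identity: since no root is fixed, the roots split into $\tau$-orbits $\{\xi,\tau(\xi)\}$, and a short computation gives $(b-a\xi)\bigl(b-a\tau(\xi)\bigr)=b^2-ac$. Thus $\tau$-invariance of the root multiset forces $\prod_i(b-a\xi_i)=(b^2-ac)^n$ automatically, so the leading-coefficient condition is a consequence of the root condition rather than an extra hypothesis, which completes the equivalence.
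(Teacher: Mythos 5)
Your proof is correct and takes essentially the same route as the paper's: write $F(x)=\prod_{i=1}^{2n}(x-\xi_i)$ over a splitting field, rewrite the left-hand side of Equation~\eqref{eq:invariant} as $\prod_i(b-a\xi_i)\cdot\prod_i\bigl(x-(b\xi_i-c)/(a\xi_i-b)\bigr)$, and use the coprimality hypothesis (no root is fixed by the involution) to pair the roots into orbits of size two so that $\prod_i(b-a\xi_i)=(b^2-ac)^n$ comes out automatically. Your additional check that $b/a$ cannot be a root of $F$ (so that no linear factor degenerates to a constant) is a detail the paper's proof passes over silently, not a difference in method.
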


\begin{proof}
To see this, assuming $F(x)$ monic as we may, and writing it as
$F(x)=\prod_{i=1}^{2n}(x-\xi_i)$ over a splitting field, we have
\begin{align*}
(ax-b)^{2n}\cdot
F\left(\frac{bx-c}{ax-b}\right)
&=
\prod_{i=1}^{2n}\bigl((bx-c)-\xi_i(ax-b)\bigr)
\\&=
\prod_{i=1}^{2n}(a\xi_i-b)
\cdot
\prod_{i=1}^{2n}\left(x-\frac{b\xi-c}{a\xi-b}\right).
\end{align*}
Hence if Equation~\eqref{eq:invariant} holds, then for every root $\xi$ of $F$ in a splitting field,
$(b\xi-c)/(a\xi-b)$ is a root as well, and with the same multiplicity.
Conversely, if the latter holds then Equation~\eqref{eq:invariant} holds up to a scalar factor.
To check that that factor is one we use our assumption that  $F(x)$ is coprime with $ax^2-2bx+c$,
and hence we may assume that $\xi_{n+i}=(b\xi_i-c)/(a\xi_i-b)$
for $n<i\le 2n$.
Consequently, we have
$a\xi_{n+i}-b=(b^2-ac)/(a\xi_i-b)$,
from which we conclude that
$\prod_{i=1}^{2n}(a\xi_i-b)=(b^2-ac)^n$
as desired.
\end{proof}

We are now ready to prove Theorem~\ref{thm:Meyn_generalized}.

\begin{proof}[Proof of Theorem~\ref{thm:Meyn_generalized}]
If $f(x)=\sum_{k=0}^n a_kx^k$, with $a_n\neq 0$, then the coefficient of $x^{2n}$ in $f_R(x)$
equals $\sum_{k=0}^n a_kg_2^kh_2^{n-k}$, which equals $h_2^n\cdot f(g_2/h_2)$ if $h_2\neq 0$,
and $a_ng_2^n$ otherwise.
Consequently, the polynomials $f$ under consideration satisfy
$\deg f_R=2\deg f$.
It readily follows that the {\em quadratic transformation}
$f(x)\mapsto f_R(x)$
preserves multiplication of such polynomials, in the sense that
$(f_1\cdot f_2)_R(x)=(f_1)_R(x)\cdot(f_2)_R(x)$.
In particular, $f_R(x)$ can possibly be irreducible only if $f(x)$ is.

Now suppose that $f_R(x)$ is irreducible, of degree $2n/d$ with $d$ odd, for some permitted $f(x)$.
Because $f(x)$ is irreducible of degree a divisor of $n$, it divides $x^{q^n}-x$.
Consequently, $f_R(x)$ divides
\[
(x^{q^n}-x)_R
=h(x)^{q^n}\cdot\left(\frac{g(x)^{q^n}}{h(x)^{q^n}}-\frac{g(x)}{h(x)}\right)
=\frac{g(x)^{q^n}h(x)-g(x)h(x)^{q^n}}{h(x)}.
\]
Now we have
\begin{align*}
g(x)^{q^n}h(x)-g(x)h(x)^{q^n}
&=
(g_2h_1-g_1h_2)x^{2q^n+1}
+(g_2h_0-g_0h_2)x^{2q^n}
\\&\quad+
(g_1h_2-g_2h_1)x^{q^n+2}
+(g_1h_0-g_0h_1)x^{q^n}
\\&\quad+
(g_0h_2-g_2h_0)x^{2}
+(g_0h_1-g_1h_0)x
\\&=
(x^{q^n}-x)\cdot H_{R,q^n}(x),
\end{align*}
where
\[
H_{R,q^n}(x)=ax^{q^n+1}-b(x^{q^n}+x)+c,
\]
having set
\[
a=g_2h_1-g_1h_2,
\quad
b=g_0h_2-g_2h_0,
\quad
c=g_1h_0-g_0h_1.
\]
Because $f_R(x)$ is irreducible of degree not dividing $n$, it cannot divide $x^{q^n}-x$, and hence it must divide $H(x)$.

Conversely, let $F(x)$ be any irreducible factor of $H(x)$.
Then
\[
x^{q^n}\equiv
\frac{bx-c}{ax-b}
\pmod{F(x)},
\]
and hence
\[
x^{q^{2n}}
\equiv
\left(\frac{bx-c}{ax-b}\right)^{q^n}
=
\frac{bx^{q^n}-c}{ax^{q^n}-b}
\equiv
\frac{b(bx-c)-c(ax-b)}{a(bx-c)-b(ax-b)}
=x
\pmod{F(x)}.
\]
Hence $F(x)$ divides $x^{q^{2n}}-x$ but not $x^{q^n}-x$.
(In particular, $\F_{q^{2n}}$ contains a splitting field for $F(x)$.)
Consequently, $F(x)$ has degree a divisor of $2n$ which is not a divisor of $n$.

We note in passing that the argument employed in the previous paragraph
has a natural extension to a M\"obius transformation
$x\mapsto(\gamma x+\delta)/(\alpha x+\beta)$
of higher order.
In fact, it provides information on the order, and consequently on the factorization in $\F_q[x]$,
of polynomials of the form $\alpha x^{q^n+1}+\beta x^{q^n}-\gamma x-\delta$,
see~\cite[Proposition~2.3]{Mat:nonsing-der-2}.
Such factorizations have been further investigated in~\cite{StiTop}.

It remains to prove that $F(x)=f_R(x)$ for some $f\in\F_q[x]$, which we do by an application of Lemma~\ref{lemma:invariant_generalized}.
The triple $(a,b,c)$ defined above
is the standard cross product of $(h_0,h_1,h_2)$ and $(g_0,g_1,g_2)$,
and hence is orthogonal to both of them with respect to the standard scalar product in $\F_q^3$.
This ensures that the conditions stated in the first paragraph of Lemma~\ref{lemma:invariant_generalized} are met.
(In case $K$ has characteristic two, our assumption that $g_1$ and $h_1$ are not both zero
implies that $a$ and $c$ are not both zero.)
Thus, we only need to check that Equation~\eqref{eq:invariant} is satisfied,
which we may do in terms of the roots of $F(x)$ in a splitting field, according to Lemma~\ref{lemma:checking}.
We have seen that $F(x)$ has all its roots in $\F_{q^{2n}}$.
If $\xi$ is any of them,
then $\xi^{q^n}=(b\xi-c)/(a\xi-b)$ is also a root, and clearly both are simple roots.
According to Lemma~\ref{lemma:checking} we conclude that Equation~\eqref{eq:invariant} is satisfied, as desired.
\end{proof}

In summary, Theorem~\ref{thm:Meyn_generalized} tells us that, under its hypotheses and up to a scalar factor,
the product of all irreducible polynomials of the form $f_R(x)$
of degree a divisor of $2n$ which does not divide $n$ equals
\[
\frac{ax^{q^n+1}-b(x^{q^n}+x)+c}{(ax^2-2bx+c,x^{q^n}-x)},
\]
where $a,b,c$ are obtained from $R(x)=g(x)/h(x)$ as described there.
Compare with Theorem~\ref{thm:Meyn}, where $(a,b,c)=(1,0,-\sigma)$.
The degree of this product polynomial equals $q^n-\varepsilon^n$,
where $\varepsilon=0$ for $q$ even, and
$\varepsilon=\pm 1\in\Z$ for $q$ odd according as to whether
$b^2-ac$ is a square or a nonsquare in $\F_q$.
Theorem~\ref{thm:Ahmadi} would follow again by an application of M\"obius inversion.

Irreducible factors of polynomials of the form $H(x)=ax^{q^n+1}-b(x^{q^n}+x)+c$ as in Theorem~\ref{thm:Meyn_generalized}
were already considered in~\cite{StiTop}.
In essence, they were characterized in~\cite[Theorem~4.2]{StiTop} as those irreducible polynomials which are invariant under a certain transformation,
expressed by our Equation~\eqref{eq:invariant}.
While the remainder of~\cite{StiTop} focuses on asymptotic counting results,
our Theorem~\ref{thm:Meyn_generalized} provides an explicit construction for those irreducible factors
as resulting from the application of the appropriate quadratic transformation.

\section{Variations on Lemma~\ref{lemma:invariant_quadratic}}\label{sec:variations}

In Section~\ref{sec:counting} we have chosen to give what we feel is the simplest and most direct proof of Lemma~\ref{lemma:invariant_quadratic},
but several other lines of proof are possible, which we outline here.
We can clearly restrict ourselves to discussing the only nontrivial implication, namely, the existence of $f$ given $F$.

One possibility is a reduction to the well-known special case $\sigma=1$ of self-reciprocal polynomials,
which can be done by extending the field $K$ to one containing a square root $\rho$ of $\sigma$.
In fact, upon substituting $x$ with $\rho x$, the condition
$x^{2n}\cdot F(\sigma/x)=\sigma^nF(x)$
becomes
$x^{2n}\cdot \tilde F(1/x)=\tilde F(x)$
in terms of $\tilde F(x)=F(\rho x)$.
This means that $\tilde F(x)$ is self-reciprocal.
An appeal to that special case followed by the inverse substitution
produces the desired polynomial $\tilde f\in K(\rho)[x]$, and it only remains to check that $f$ actually has coefficients in $K$.
We omit the details.

Another proof uses a classical argument of field theory and relies on the fact that $K(x+\sigma/x)$ is the fixed subfield of
the automorphism of $K(x)$ given by $x\mapsto\sigma/x$.
There is no need to spell out this proof either, as it is a special case of our proof of Lemma~\ref{lemma:invariant_generalized} above.
This argument easily transfers to other situations, as in the
proof of Lemma~\ref{lemma:invariant_cubic} below.

The proofs of Lemma~\ref{lemma:invariant_quadratic} which we have described so far are not constructive.
A simple proof by induction on $n$ (as in~\cite[Lemma~2.75]{Jungnickel} for the special case $\sigma=1$) produces an algorithm for
recovering $f$ from $F$.
However, one can actually write
an explicit formula for $f$ in terms of $F$ using {\em Dickson polynomials}.
Recall that the {\em Dickson polynomial of the first kind} of degree $n$, for $n\ge 0$, is
\[
D_n(x,a)=\sum_{i=0}^{\lfloor n/2\rfloor}
\frac{n}{n-i}\binom{n-i}{i}(-a)^ix^{n-2i},
\]
see~\cite{LMT:Dickson} or~\cite{LN}.
The fundamental property of those Dickson polynomials, which can also be used to define them, is the functional equation
$D_n(x+a/x)=x^n+(a/x)^n$.
Now, in the setting of Lemma~\ref{lemma:invariant_quadratic},
if $F\in K[x]$ of degree $2n$
satisfies $x^{2n}\cdot F(\sigma/x)=\sigma^nF(x)$,
then $F(x)/x^n=b_n+\sum_{k=1}^nb_{n+k}(x^k+\sigma/x^k)$,
and hence
$F(x)=x^n\cdot f(x+\sigma/x)$,
where
$f(y)=b_n+\sum_{k=1}^n b_{n+k}D_k(y,\sigma)$.
Straightforward manipulation then leads to a formula for the coefficient of $y^j$ in $f(y)$
in terms of the coefficients of $F(x)$.
To keep that simple assume that $K$ has characteristic different from two, allowing us
to rewrite the central coefficient $b_n$ of $F(x)$ as $2b_n$, whence
$F(x)/x^n=\sum_{k=0}^nb_{n+k}(x^k+\sigma/x^k)$.
The coefficient of $y^j$ in $f(y)$ then equals
\[
\sum_{i=0}^{\lfloor(n-j)/2\rfloor}
\frac{2i+j}{i+j}\binom{i+j}{i}(-\sigma)^ib_{n+2i+j}.
\]

The definition of self-reciprocal polynomials in terms of appropriate invariance under the involutory substitution $x\mapsto 1/x$
prompts a natural
generalization of self-reciprocal polynomials, namely, polynomials which are invariant
under pre-composition with a M\"obius transformation of order $r$.
Such a generalization has been considered to some extent in~\cite{StiTop} and some of the references therein,
but here we focus on natural analogues of Lemma~\ref{lemma:invariant_quadratic}.

We may work over an arbitrary field $K$.
In case $K$ has positive characteristic $p$, a fundamental distinction is whether $p$ divides the order $r$ of the M\"obius transformation, or not.
We only mention an example of the former case before passing to the latter case, which is far more interesting.
Any M\"obius transformation of order $p$ is conjugate to the translation $x\mapsto x+1$.
One easily finds that any polynomial satisfying $F(x+1)=F(x)$ has the form $F(x)=f(x^p-x)$.

Under the assumption that the characteristic of $K$ does not divide $r$,
it is known from~\cite{Beauville:PGL},
that all subgroups of $\PGL(2,K)$ of order $r$ are conjugate for
$r>2$, while the conjugacy classes of subgroups (or elements) of order two are
in a natural correspondence with the elements of
$K^\ast/(K^\ast)^2$.
Lemma~\ref{lemma:invariant_quadratic} dealt with the latter case.
Note that elements of a given order $r>2$ need not be conjugate in $\PGL(2,K)$, but because the subgroups they generate are conjugate
there is essentially one higher analogue of Lemma~\ref{lemma:invariant_quadratic}
for every $r>2$, depending on a choice of an element of order $r$ in $\PGL(2,K)$.
We exemplify such results with the special cases $r=3,4$.
As representatives of elements of order 3 and 4 in $\PGL(2,K)$ we may take those represented by the matrices
$\left[\begin{smallmatrix}
0&1\\-1&1
\end{smallmatrix}\right]$,
and
$\left[\begin{smallmatrix}
0&1\\-2&2
\end{smallmatrix}\right]$,
for $q$ odd in the latter case.
This means considering the M\"obius transformations $x\mapsto 1/(1-x)$ and $x\mapsto 1/(2-2x)$,
which we do in our concluding results.

\begin{lemma}\label{lemma:invariant_cubic}
Let $K$ be a field and let $F(x)\in K[x]$ be a polynomial of degree $3n$.
We have $(x-1)^{3n}\cdot F\bigl(1/(1-x)\bigr)=F(x)$ if, and only if,
\[
F(x)=x^n(x-1)^n\cdot f\left(\frac{x^3-3x+1}{x(x-1)}\right)
\]
for some $f\in K[x]$
of degree $n$.
\end{lemma}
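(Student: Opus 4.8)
The plan is to mirror the field-theoretic argument used for Lemma~\ref{lemma:invariant_generalized}, now applied to the order-three M\"obius transformation $\phi\colon x\mapsto 1/(1-x)$ in place of an involution. Set $h(x)=x(x-1)$ and $g(x)=x^3-3x+1$, so that the rational expression in the statement is $u(x)=g(x)/h(x)$, of degree $\max(\deg g,\deg h)=3$. The first thing to record is that $u$ is fixed by $\phi$: since $\phi$ has order three, with orbit $\{x,\,1/(1-x),\,(x-1)/x\}$ on $x$, a direct computation shows that $u$ equals the orbit sum $x+\frac{1}{1-x}+\frac{x-1}{x}$, which is manifestly $\phi$-invariant. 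As $K(u)\subseteq K(x)^{\langle\phi\rangle}$ while $[K(x):K(u)]=\deg u=3$ and $[K(x):K(x)^{\langle\phi\rangle}]=3$ by Artin's theorem, we obtain $K(x)^{\langle\phi\rangle}=K(u)$, an identification valid in every characteristic.

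Next I would reduce the stated identity to $\phi$-invariance of $F/h^n$. A short computation gives $h(\phi(x))=\phi(x)\bigl(\phi(x)-1\bigr)=x/(1-x)^2$, whence
\[
\frac{h(\phi(x))^n}{h(x)^n}=\frac{x^n/(1-x)^{2n}}{x^n(x-1)^n}=\frac{1}{(x-1)^{3n}},
\]
using $(1-x)^{2n}=(x-1)^{2n}$. Thus $(x-1)^{3n}\cdot F\bigl(1/(1-x)\bigr)=F(x)$ is equivalent to $F(\phi(x))/h(\phi(x))^n=F(x)/h(x)^n$, that is, to the rational function $F/h^n$ being fixed by $\phi$. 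This settles the ``if'' direction at once: when $F=h^n\cdot f(u)$, the factor $f(u)$ is $\phi$-invariant because $u$ is, so $F/h^n$ is $\phi$-invariant and the identity holds; moreover, since $u\sim x$ at infinity, one reads off $\deg F=2n+\deg f$.

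For the converse, the equivalence just established places $F/h^n$ in the fixed field $K(x)^{\langle\phi\rangle}=K(u)$, so $F/h^n=f(u)$ for some rational expression $f\in K(y)$, and the degree comparison $\deg F=2n+\deg f=3n$ then yields $\deg f=n$ once we know $f$ is a polynomial. Establishing that polynomiality is the one delicate step, and I would argue exactly as in the proof of Lemma~\ref{lemma:invariant_generalized}: a finite pole of $f$ at some $\eta\in\overline{K}$ would force $f(u(x))=F/h^n$ to have a pole at every root $\xi$ of $g(x)-\eta h(x)$, whereas $F(x)/h(x)^n$ can only have poles at the roots $0$ and $1$ of $h$; since $g$ and $h$ are coprime (note $g(0)=1$ and $g(1)=-1$), no root of $g-\eta h$ is a root of $h$, giving the desired contradiction. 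The main obstacle is precisely this polynomiality argument; everything else is the uniform field-theoretic skeleton, and the characteristic-three case, where $\phi$ becomes parabolic, needs no separate treatment since both the fixed-field identity $K(x)^{\langle\phi\rangle}=K(u)$ and the pole argument are insensitive to the characteristic.
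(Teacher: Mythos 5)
Your proof is correct and takes essentially the same route as the paper's: both identify the fixed field of $x\mapsto 1/(1-x)$ with $K\bigl((x^3-3x+1)/(x(x-1))\bigr)$ via a Galois-theoretic degree count (the paper reads the invariance of $u$ off the coefficients of the cubic $(y-x)\bigl(y-\frac{1}{1-x}\bigr)\bigl(y-\frac{x-1}{x}\bigr)$, while you use the orbit sum directly, which is the same quantity), and both establish that $f$ is a polynomial by the identical pole argument, noting that $x^3-3x+1-\eta x(x-1)$ cannot vanish at $0$ or $1$. The only substantive addition on your side is spelling out the ``short calculation'' equating the stated identity with $\phi$-invariance of $F/h^n$, which the paper leaves implicit.
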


\begin{proof}
Consider the automorphism of the field $K(x)$ given by the substitution
$x\mapsto 1/(1-x)$ of order three.
The monic polynomial which has its distinct composition powers as its
roots is
\[
(y-x)
\left(y-\frac{1}{1-x}\right)
\left(y-\frac{x-1}{x}\right)
=
y^3
-\frac{x^3-3x+1}{x(x-1)}y^2
+\frac{x^3-3x^2+1}{x(x-1)}y
+1.
\]
Because the sum of the coefficients of $y^2$ and $y$ equals $-3$,
all coefficients belong to the subfield
$L=K\bigl(\frac{x^3-3x+1}{x(x-1)}\bigr)$ of $K(x)$.
Because $|K(x):L|=3$ equals the order of the substitution
$x\mapsto 1/(1-x)$,
we have that $K(x)$ is a Galois extension of $L$ with Galois group generated by
that substitution.

If
\begin{equation}\label{eq:gf3}
\frac{F(x)}{x^n(x-1)^n}=f\left(\frac{x^3-3x+1}{x(x-1)}\right)
\end{equation}
for some $f\in K[x]$, then the left-hand side must be invariant under the substitution
$x\mapsto 1/(1-x)$,
and
$(x-1)^{3n}\cdot F\bigl(1/(1-x)\bigr)=F(x)$
follows after a short calculation.
Conversely, if the latter holds then Equation~\eqref{eq:gf3} holds
for some rational expression $f\in K(x)$, necessarily of degree $n$.
If $f$ were not a polynomial, then it would have a pole at some
$\eta\in\overline{K}$,
the algebraic closure of $K$.
But then the right-hand side of Equation~\eqref{eq:gf3}, and hence
the left-hand side as well, would have a pole at any root $\xi\in\overline{K}$ of
the polynomial
$(x^3-3x+1)-\eta x(x-1)$.
Because this polynomial cannot have $0$ or $1$ as roots, this is
impossible.
We conclude that $f\in K[x]$, as desired.
\end{proof}

Differently from the previous discussion, we have not excluded that $K$ may have characteristic three in Lemma~\ref{lemma:invariant_cubic},
but in that case the substitution $x\mapsto 1/(1-x)$ is conjugate to $x\mapsto x+1$, an easy case which we have briefly discussed earlier on.

\begin{lemma}
Let $K$ be a field of characteristic not two, and let $F(x)\in K[x]$ be a polynomial of degree $4n$.
Then
\[
(-1/4)^{n}\cdot (2-2x)^{4n}\cdot F\bigl(1/(2-2x)\bigr)=F(x)
\]
holds if, and only if,
\[
F(x)=x^n(x-1)^n(x-1/2)^n\cdot f\left(\frac{x^4-3x^2+2x-1/4}{x(x-1)(x-1/2)}\right)
\]
for some $f\in K[x]$
of degree $n$.
\end{lemma}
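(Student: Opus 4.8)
The plan is to follow the proof of Lemma~\ref{lemma:invariant_cubic} almost verbatim, replacing the order-three substitution by the order-four substitution $\tau\colon x\mapsto 1/(2-2x)$. First I would confirm that $\tau$ has order four by computing its iterates, which yields the orbit
\[
x\ \longmapsto\ \frac{1}{2-2x}\ \longmapsto\ \frac{1-x}{1-2x}\ \longmapsto\ \frac{2x-1}{2x}\ \longmapsto\ x .
\]
Writing $D(x)=x(x-1)(x-1/2)$ and $R(x)=(x^4-3x^2+2x-1/4)/D(x)$, I would next identify the fixed field $L=K(x)^{\langle\tau\rangle}$ with $K(R)$. The quickest route is a direct verification that $R$ is $\tau$-invariant: substituting $u=1/(2-2x)$ and using $u-1=(2x-1)/(2-2x)$ and $u-1/2=x/(2-2x)$, one finds that the numerator and denominator of $R(u)$ equal $-16\,N(x)$ and $-16\,D(x)$ respectively (where $N(x)=x^4-3x^2+2x-1/4$), so that $R(u)=R(x)$ and hence $K(R)\subseteq L$. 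Since $N$ takes the nonzero values $-1/4$, $1/16$, $-1/4$ at $x=0,1/2,1$, it is coprime to $D$, so $R$ has degree four as a rational expression and $|K(x):K(R)|=4$ by the fact recalled before the proof of Lemma~\ref{lemma:invariant_generalized}. As $|K(x):L|=4$ by Artin's theorem, we get $K(R)=L$, and $K(x)/L$ is Galois with group $\langle\tau\rangle$. (One could instead form the monic quartic in $y$ with the four orbit elements as roots and check, as in the cubic case, that its coefficients lie in $K(R)$; the direct invariance check seems shorter.)

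For the forward implication I would start from $F(x)=D(x)^n\cdot f(R(x))$ and observe that $f(R(x))$, hence $F(x)/D(x)^n$, is $\tau$-invariant; spelled out, this says $F(1/(2-2x))/D(1/(2-2x))^n=F(x)/D(x)^n$. The decisive computation is
\[
\frac{D\bigl(1/(2-2x)\bigr)}{D(x)}=\frac{-1/4}{(x-1)^4},
\]
which follows from the expressions for $u-1$ and $u-1/2$ above together with $(2-2x)^3=-8(x-1)^3$. Raising to the $n$th power and using $(2-2x)^{4n}=2^{4n}(x-1)^{4n}$ converts the invariance into $(2-2x)^{4n}F(1/(2-2x))=(-4)^nF(x)$, and multiplying through by $(-1/4)^n$ gives exactly the stated functional equation. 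The converse runs as in Lemma~\ref{lemma:invariant_cubic}: the functional equation makes $F(x)/D(x)^n$ a $\tau$-invariant element of $K(x)$, hence equal to $f(R(x))$ for some $f\in K(x)$; were $f$ to have a pole at some $\eta\in\overline{K}$, then $F(x)/D(x)^n$ would acquire poles at the roots of $N(x)-\eta\,D(x)$, none of which is $0$, $1/2$ or $1$ because $N$ does not vanish there, contradicting that the only finite poles of $F(x)/D(x)^n$ lie at $0$, $1/2$, $1$. Hence $f\in K[x]$, and its degree is $n$ since $\deg F=4n=3n+\deg f$.

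The step I expect to demand the most care is tracking the scalar factor: in contrast to the order-three case, where the analogous constant was $1$, here $\tau$ scales the cubic $D(x)$ nontrivially, and it is precisely the resulting factor $(-1/4)^n$ that must be inserted to normalize the functional equation. Getting this constant right — and, relatedly, keeping the powers of $2$ straight in $(2-2x)^{4n}=2^{4n}(x-1)^{4n}$ — is the only genuinely new bookkeeping beyond the cubic case; everything else transfers mechanically. This is also the point where $\charac K\neq 2$ enters: the finite points $0,1/2,1$ of the $\tau$-orbit of $\infty$, which furnish the three linear factors of $D(x)$, are distinct exactly when $\charac K\neq 2$, equivalently when the matrix $\left[\begin{smallmatrix}0&1\\-2&2\end{smallmatrix}\right]$ is nonsingular.
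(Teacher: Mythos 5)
Your proof is correct and takes exactly the approach the paper intends: the paper omits this proof as ``entirely similar to that of Lemma~\ref{lemma:invariant_cubic}'', and your argument is precisely that adaptation, identifying the fixed field of $x\mapsto 1/(2-2x)$ with $K(R)$ where $R=(x^4-3x^2+2x-1/4)/D(x)$ and $D(x)=x(x-1)(x-1/2)$, converting invariance of $F(x)/D(x)^n$ under this substitution into the stated functional equation via $D\bigl(1/(2-2x)\bigr)/D(x)=(-1/4)/(x-1)^4$, and using the pole argument to force $f\in K[x]$ of degree $n$. Two tiny remarks: when you clear $(2-2x)^4$, the numerator and denominator of $R\bigl(1/(2-2x)\bigr)$ both scale by $-4$ rather than $-16$ (immaterial, since the two factors agree, which is all the invariance argument needs), and the paper's closing observation that the argument of $f$ is the sum of the four iterates of $1/(2-2x)$ yields that invariance with no computation at all.
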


We omit the proof, which is entirely similar to that of Lemma~\ref{lemma:invariant_cubic}, but
just point out that the argument of $f$ in the above equation for $F(x)$ equals
\[
x
+\frac{1}{2-2x}
+\frac{1-x}{1-2x}
+\frac{2x-1}{2x},
\]
the sum of the iterates of $1/(2-2x)$.

\bibliography{References}
\end{document}